\newcommand{\R}{\mathbb{R}}
\newcommand{\Z}{\mathbb{Z}}
\newcommand{\M}{\mathrm{Mod}}
\newcommand{\Ho}{\mathrm{Homeo}}
\newcommand{\Diff}{\mathrm{Diff}}
\newcommand{\Aut}{\mathrm{Aut}}
\newcommand{\Out}{\mathrm{Out}}
\newcommand{\Inn}{\mathrm{Inn}}
\newcommand{\id}{\mathrm{id}}
\newcommand{\Gal}{\mathrm{Gal}}
\newcommand{\Q}{\mathbb{Q}}
\newcommand{\C}{\mathbb{C}}
\newcommand{\GL}{\mathrm{GL}}
\newcommand{\SL}{\mathrm{SL}}
\newcommand{\ab}{\mathrm{ab}}
\newtheorem{prop}{Proposition}
\newtheorem{lemma}{Lemma}
\newtheorem{rem}{Remark}
\title{Representations of the mapping class group of a non-orientable surface}
\author{Ferit Deniz and Wilhelm Singhof}
\date{}
\begin{document}

\maketitle
\begin{abstract}
By considering appropriate finite covering spaces of closed non-orientable surfaces, we construct linear representations of their mapping class group which have finite index image in certain big arithmetic groups.

\textbf{Key words:} mapping class group, non-orientable surface, representations
\end{abstract}

\section{Introduction}\label{intro}

In this paper, we study representations of the mapping class group $\M(N_{g+1})$ of the closed non-orientable surface $N_{g+1}$.
Corresponding (and better) results for orientable surfaces were obtained by Looijenga \cite{Lo} and for automorphism groups of
free groups by Grunewald-Lubotzky \cite{GL}. Our main result is the following:

\medskip
\textbf{Theorem.}
\emph{ Let $g=2r \geq 4$ and let $k$ be a natural number. Let $R$ be the ring of integers in the number field $\Q(e^{2\pi i/k})$. There is
 a subgroup $\Gamma$ of $\M(N_{g+1})$ of finite index and a homomorphism 
 \[\sigma: \Gamma \rightarrow \SL(g-1,R)^k\]
 such that $\sigma(\Gamma)$ is of finite index in $\SL(g-1,R)^k$.}

\medskip
Our approach is basically the same as in \cite{Lo} and \cite{GL}:
We look at certain finite coverings $\tilde{N}$ of $N:=N_{g+1}$. Let $G$ be the group of deck transformations. The subgroup $\Gamma$
consists roughly of those elements of $\M(N)$ which lift to $\tilde{N}$ in a $G$-equivariant manner, and the representation $\sigma$
is obtained from the canonical action of $\Gamma$ on the homology group $H_1(\tilde{N})$.

\medskip
Let us point out the main differences to \cite{Lo} and \cite{GL}: Looijenga took any finite abelian covering $\tilde{\varSigma}$ of an
orientable surface $\varSigma$ and obtained an arithmeticity result for the representation on $H_1(\tilde{\varSigma})$. For rather
trivial reasons, such a result is not true in the non-orientable case; as we shall see, the orientation covering is a counter-example.
To prove our theorem, we look at special coverings for which the group $G$ is a product of two cyclic groups. 

\medskip
On the other hand, the work of Grunewald and Lubotzky on the automorphism groups of free groups is more general in that they allow 
arbitrary, not necessarily abelian, finite groups $G$. In fact, one of their main results is obtained by taking for $G$ a product
of two cyclic groups and a symmetric group. We shall study such a situation for mapping class groups in a subsequent 
paper in which we also treat the case of odd genus.

\medskip
Here is a short outline of the contents of the various sections of the paper:

\medskip
In section 2, we recall first how a covering $\tilde{M}$ of a surface $M$, given by an epimorphism $v:\pi \rightarrow G$ from the
fundamental group $\pi$ of $M$ onto a finite group $G$, leads to a representation of a subgroup $\Gamma(v)$ of finite index of
$\Aut\,\pi$ on the $G$-space $H_1(\tilde{M})$; we give two different interpretations of $\Gamma(v)$. Then we analyze the structure
of the rational homology of $\tilde{M}$ as a $\Q[G]$-module. In the situation which is relevant for the rest of the paper in which
$M=N_{g+1}$ and $\tilde{M}$ is non-orientable, it is easy to see that 
\[H_1(\tilde{M};\Q) \cong \Q \oplus \Q[G]^{g-1}\;.\]

This explains why the number $g-1$ appears in our theorem. From that point on, we consider only the case $g=2r$ and take a particularly
simple homomorphism $v$ from $\pi$ to a product of two cyclic groups.

\medskip
In section 3, we take a closer look at the orientation covering $\varSigma_g \rightarrow N_{g+1}$. We can consider $\M(N_{g+1})$ as a
subgroup of $\M(\varSigma_g)$; we work out how certain elements of $\M(N_{g+1})$ which are known to be generators of $\M(N_{g+1})$
operate on the fundamental group $\pi$. We use this in section 4 to complete the proof of the main theorem by producing sufficiently
many elements in the subgroup $\Gamma(v)$ of $\pi$.

\medskip
We are grateful to Alex Lubotzky for sending us a preliminary version of work on representations of mapping class groups of orientable surfaces. 

\section{The construction of representations}\label{constr}

\subsection{The general setting}\label{general}

Let $\pi$ be any group, let $G$ be a finite group and $v:\pi \rightarrow G$ a surjective homomorphism with kernel $U$. We denote by
$q:U \rightarrow U^{\ab}$ the canonical projection and obtain a linear operation $G \times U^{\ab} \rightarrow U^{\ab}$ by
\begin{eqnarray}
v(a) \cdot q(u) := q(aua^{-1})  \label{S0} 
\end{eqnarray}

for $a \in \pi$ and $u \in U$. Let
\begin{eqnarray}
 \Gamma (v):= \{\varphi \in \Aut\, \pi \mid v \circ \varphi = v\}\; . \label{S1}
\end{eqnarray}

If the group $\pi$ is finitely generated, the subgroup $\Gamma (v)$ has finite index in $\Aut\, \pi$.

\smallskip
For $\varphi \in \Gamma (v)$, we have $\varphi (U)=U$, and $\varphi$ defines an automorphism $\bar{\varphi}$ of $U^{\ab}$ which is
$G$-equivariant. By $\varphi \mapsto \bar{\varphi}\, \otimes\, \id$, we get a homomorphism 
\begin{eqnarray}
 \rho_v: \Gamma(v) \rightarrow \Aut_{\C} (U^{\ab} \otimes_{\Z} \C) \; . \label{S2}
\end{eqnarray}

Let $H \subseteq U^{\ab} \otimes_{\Z} \C$ be an isotypical component of $U^{\ab} \otimes_{\Z} \C$ considered as a representation of $G$.
We have $\bar{\varphi}\, \otimes \,\id(H) \subseteq H$,and thus we obtain a representation 
\begin{eqnarray}
 \rho_v^H: \Gamma(v) \rightarrow \Aut_{\C} \, H\;. \label{S3}
\end{eqnarray}

Instead of $\C$, we can of course take an appropriate number field. This, so far, is a direct generalization of the basic construction of
\cite{GL} from free groups to arbitrary groups.

\medskip
Now suppose that $\pi =\pi_1(M,y_0)$ is the fundamental group of a good topological space, say a connected CW complex. Let $p: \tilde{M}
\rightarrow M$ be a covering space of $M$, with $\tilde{M}$ connected, such that 
\[p_{\ast} (\pi_1(\tilde{M}, x_0))=U\]
for each $x_0 \in p^{-1}(y_0)$. The homomorphism $v$ provides us with an isomorphism between $G$ and the group of deck transformations
of $p$. Under this isomorphism, the operation (\ref{S0}) of $G$ on $U^{\ab}$ becomes the usual operation of deck transformations on 
$H_1(\tilde{M};\Z)$.

\smallskip
Let $\Ho (M,y_0)$ be the group of all homeomorphisms $f:M \rightarrow M$ with $f(y_0)=y_0$. For $f \in \Ho (M,y_0)$, let $f_{\ast} \in 
\Aut\,\pi$ be the induced automorphism. We fix a point $x_0 \in p^{-1}(y_0)$. Let us say that an element $\tilde{f} \in \Ho(\tilde{M},x_0)$
\emph{lies above} $f$ if $p \circ \tilde{f}= f \circ p$. By standard covering space theory, we have:

\begin{rem}\label{Rem}
 For $f \in \Ho(M,y_0)$, the following two assertions are equivalent:
 \begin{itemize}
  \item[(i)] $f_{\ast} \in \Gamma(v)$.
  \item[(ii)] There exists $\tilde{f} \in \Ho(\tilde{M},x_0)$ which lies above $f$ and is $G$-equivariant.
 \end{itemize}
\end{rem}

Let now $M$ be closed surface, orientable or not. We begin by showing that, as for free groups in \cite{GL}, there is a homological
criterion for an automorphism of $\pi$ to belong to $\Gamma(v)$.

\begin{lemma}\label{equiv}
Let $M$ be a closed surface of genus $\geq 2$. For $f \in \Diff(M,y_0)$, the following assertions are equivalent:
 
 \begin{itemize}
  \item[(i)] $f_{\ast} \in \Gamma(v)$.
  \item[(ii)] There exists $\tilde{f} \in \Diff(\tilde{M},x_0)$ lying above $f$ such that the induced automorphism $\tilde{f}_{\ast}$ of $H_1(\tilde{M};\C)$
  is $G$-equivariant.
 \end{itemize}

\end{lemma}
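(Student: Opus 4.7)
The direction (i) $\Rightarrow$ (ii) is immediate: by Remark \ref{Rem}, (i) provides a $G$-equivariant lift $\tilde{f} \in \Ho(\tilde{M},x_0)$; such an $\tilde{f}$ is automatically smooth since $p$ is a local diffeomorphism and $f \in \Diff(M,y_0)$, and its induced map on homology obviously commutes with the $G$-action. The substance of the lemma is (ii) $\Rightarrow$ (i), and my plan is to reduce it to a faithfulness statement for the $G$-representation on $H_1(\tilde{M};\Q)$.

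So fix $\tilde{f}$ as in (ii). For each $g \in G$, the composition $\tilde{f} \circ g \circ \tilde{f}^{-1}$ covers $f \circ \id_M \circ f^{-1} = \id_M$, hence is again a deck transformation $\phi(g) \in G$. The assignment $g \mapsto \phi(g)$ is an automorphism of $G$ with $\tilde{f} \circ g = \phi(g) \circ \tilde{f}$. Passing to induced maps on $H_1(\tilde{M};\C)$ and using the hypothesis that $\tilde{f}_\ast$ is $G$-equivariant, I obtain
\[
 \phi(g)_\ast \circ \tilde{f}_\ast \;=\; \tilde{f}_\ast \circ g_\ast \;=\; g_\ast \circ \tilde{f}_\ast
\]
for every $g \in G$. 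Since $\tilde{f}_\ast$ is invertible, this means $g_\ast = \phi(g)_\ast$, i.e.\ $\phi(g)\,g^{-1}$ acts as the identity on $H_1(\tilde{M};\C)$, equivalently on $H_1(\tilde{M};\Q)$. If the $G$-action on $H_1(\tilde{M};\Q)$ is faithful, I conclude $\phi = \id$, so $\tilde{f}$ is itself $G$-equivariant, and then Remark \ref{Rem} yields $f_\ast \in \Gamma(v)$.

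The crux, therefore, is the faithfulness claim, which I plan to establish via the Lefschetz fixed-point theorem. The genus hypothesis gives $\chi(M) < 0$, hence $\chi(\tilde{M}) = |G|\cdot\chi(M) \neq 0$ and in fact $\tilde{M}$ is itself a closed surface of genus $\geq 2$. Suppose $h \in G$ acts trivially on $H_1(\tilde{M};\Q)$. Then the trace of $h_\ast$ on $H_0$ is $1$ and on $H_1$ is $\beta_1(\tilde{M})$; if $\tilde{M}$ is non-orientable, $H_2(\tilde{M};\Q) = 0$ and $L(h) = 1 - \beta_1(\tilde{M}) = \chi(\tilde{M}) \neq 0$, while if $\tilde{M}$ is orientable, $h_\ast$ acts as $\varepsilon \in \{\pm 1\}$ on $H_2(\tilde{M};\Q)$, and $L(h) = 1 - 2g(\tilde{M}) + \varepsilon$ is non-zero since $g(\tilde{M}) \geq 2$. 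On the other hand, $h$ acts freely on $\tilde{M}$ as a deck transformation, so has no fixed points and $L(h) = 0$. This forces $h = e$, establishing faithfulness.

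The main obstacle is exactly this Lefschetz-based faithfulness argument; the rest is formal bookkeeping around Remark \ref{Rem} and the automorphism $\phi$ of $G$ induced by any lift. I do not expect any extra difficulty from the distinction between $\Ho$ in the remark and $\Diff$ in the lemma, since smoothness passes through covering space lifts without issue.
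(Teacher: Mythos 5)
Your proposal is correct and follows essentially the same route as the paper: reduce (ii) $\Rightarrow$ (i), via Remark \ref{Rem}, to showing that a deck transformation acting trivially on $H_1(\tilde{M};\Q)$ must be the identity, and settle that by a Lefschetz fixed-point computation using $\chi(\tilde M)<0$. The only difference is cosmetic: you use that a nontrivial deck transformation is fixed-point free, so its Lefschetz number vanishes, whereas the paper computes the fixed-point index through a $G$-invariant triangulation and the inequality $\chi(F)\geq 0$.
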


\emph{Proof}. We have to show that \emph{(ii)} implies \emph{(i)}. By (\ref{Rem}), this means that we have to show that, for an element
 $g \in G$ considered as a diffeomorphism of $\tilde{M}$, we have $\tilde{f} \circ g = g \circ \tilde{f}$. Since $\tilde{f} \circ g$ lies
 above $f$, there exists $h \in G$ with $\tilde{f} \circ g = h \circ \tilde{f}$, hence 
\[\tilde{f}_{\ast} \circ g_{\ast} = h_{\ast} \circ \tilde{f}_{\ast}\; .\]

On the other hand, equivariance of $\tilde{f}_{\ast}$ means 
\[\tilde{f}_{\ast} \circ g_{\ast} = g_{\ast} \circ \tilde{f}_{\ast}\; .\]

We conclude that $h_{\ast} = g_{\ast}$. Let $\gamma:= hg^{-1}$. Then $\gamma_{\ast} = \id$, and we have to show that $\gamma = \id$.

\smallskip
We choose a triangulation of $\tilde{M}$ which is $G$-invariant and for which the fixed point set $F$ of $\gamma$ is a subcomplex. Assume
that $\gamma \neq \id$. Then $F$ is a finite disjoint union of points and circles, hence $\chi(F) \geq 0$. Consider the fixed point index

\[I(\gamma) = \sum_i (-1)^i \; \text{trace} \, (\gamma_{\ast} \mid H_i(\tilde{M};\C))\; .\]

Computing $I(\gamma)$ by means of the simplicial chain complex of $\tilde{M}$, we see that 
\[I(\gamma) = \chi(F) \geq 0\;.\]

On the other hand, since $\gamma_{\ast} \mid H_1(\tilde{M};\C)$ is the identity and since the genus of $M$ and hence of $\tilde{M}$ is
$\geq 2$, we have $I(\gamma) < 0$. This shows that $\gamma = \id$. \hfill $\Box$

\begin{prop}\label{gamma}
 Let $\pi$ be the fundamental group of a closed surface of genus $\geq 2$, let $G$ be a finite group and $v:\pi \rightarrow G$ an
 epimorphism with kernel $U$. For $\varphi \in \Aut\,\pi$, the following assertions are equivalent:
 \begin{itemize}
  \item[(i)] $\varphi \in \Gamma(v)$.
  \item[(ii)] $\varphi (U) = U$ and $\varphi$ induces a $G$-equivariant automorphism $\bar{\varphi}\,\otimes\,\id$ of\linebreak $U^{\ab}\otimes_{\Z}\C$.
 \end{itemize}

\end{prop}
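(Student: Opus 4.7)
\emph{Proof plan.} Direction (i) $\Rightarrow$ (ii) is essentially immediate: if $v \circ \varphi = v$, then both $\varphi$ and $\varphi^{-1}$ preserve $U = \ker v$, and a one-line calculation from (\ref{S0}), using $v(\varphi(a)) = v(a)$, gives $\bar\varphi(v(a) \cdot q(u)) = q(\varphi(a)\varphi(u)\varphi(a)^{-1}) = v(a) \cdot \bar\varphi(q(u))$, so $\bar\varphi \otimes \id$ is $G$-equivariant.

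For (ii) $\Rightarrow$ (i) the plan is to realize $\varphi$ topologically and apply Lemma \ref{equiv}. By the pointed Dehn-Nielsen-Baer theorem for closed surfaces of genus $\geq 2$ (orientable or non-orientable), the natural map $\Diff(M,y_0) \rightarrow \Aut\,\pi$ is surjective, so pick $f \in \Diff(M,y_0)$ with $f_\ast = \varphi$. Since $\varphi(U) = U$, the covering-space lifting criterion produces a diffeomorphism $\tilde f \in \Diff(\tilde M, x_0)$ with $\tilde f(x_0) = x_0$ and $p \circ \tilde f = f \circ p$.

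Under the Hurewicz isomorphism combined with $p_\ast : \pi_1(\tilde M, x_0) \xrightarrow{\sim} U$, the induced automorphism $\tilde f_\ast$ on $H_1(\tilde M; \C)$ corresponds to $\bar\varphi \otimes \id$ on $U^{\ab} \otimes_{\Z} \C$, and the paragraph following (\ref{S2}) already identifies the $G$-action by deck transformations with the action (\ref{S0}). Hypothesis (ii) of the proposition is therefore exactly condition (ii) of Lemma \ref{equiv} for this $f$, and the lemma delivers $\varphi = f_\ast \in \Gamma(v)$.

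The only substantive ingredient beyond Lemma \ref{equiv} is the \emph{pointed} form of Dehn-Nielsen-Baer, i.e., surjectivity of $\Diff(M,y_0) \rightarrow \Aut\,\pi$ rather than merely onto $\Out\,\pi$; this is classical for closed surfaces of negative Euler characteristic, so no genuine obstacle arises beyond citing it correctly.
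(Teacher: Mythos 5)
Your proposal is correct and follows essentially the same route as the paper: invoke the pointed Dehn--Nielsen theorem to realize $\varphi$ as $f_\ast$ for some $f \in \Diff(M,y_0)$, lift $f$ to $\tilde f \in \Diff(\tilde M, x_0)$ using $\varphi(U)=U$, identify $\tilde f_\ast$ on $H_1(\tilde M;\C)$ with $\bar\varphi \otimes \id$, and conclude via Lemma \ref{equiv}. The extra detail you supply (the easy direction and the Hurewicz/base-point identification) is consistent with what the paper leaves implicit.
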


\begin{proof}
 We have to show that (ii) implies (i). By the Dehn-Nielsen theorem, there exists $f \in \Diff(M,y_0)$ with $\varphi =f_{\ast}$. Since $\varphi (U)=U$, there
 exists $\tilde{f} \in \Diff(\tilde{M},x_0)$ lying above $f$. By assumption, $\tilde{f}_{\ast} \in \Aut\,H_1(\tilde{M};\C)$ is equivariant.
 Hence $\varphi \in \Gamma(v)$ by Lemma \ref{equiv}.
\end{proof}

\begin{prop}\label{structure}
Let $M$ be a closed surface with fundamental group $\pi$, let $G$ be a finite group and let $p:\tilde{M}\rightarrow M$ be the covering of $M$
belonging to the epimorphism $v:\pi \rightarrow G$. As a $\Q[G]$-module, the rational homology of $\tilde{M}$ has the following structure:
\begin{itemize}
 \item[(a)] $H_1(\tilde{M})\cong \Q^2 \oplus \Q[G]^{2g-2}$ if $M=\varSigma_g$.
 \item[(b)] $H_1(\tilde{M})\cong \Q \oplus \Q[G]^{g-1}$ if $M=N_{g+1}$ and if $\tilde{M}$ is non-orientable.
 \item[(c)] $H_1(\tilde{M})\cong \Q\oplus \Q^- \oplus \Q[G]^{g-1}$ if $M=N_{g+1}$ and if $\tilde{M}$ is orientable.
\end{itemize}
Here, $\Q^-$ is the one-dimensional representation of $G$ defined as follows: Let $H$ be the (unique) subgroup of index $2$ of $\pi$
which is isomorphic to $\pi_1(\varSigma_g)$. Then $v(H)$ is a subgroup of index $2$ of $G$. Let $\Q^-$ be the representation with kernel
$v(H)$.
\end{prop}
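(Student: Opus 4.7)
My plan is to use a $G$-equivariant cell structure on $\tilde{M}$ together with the Euler-characteristic identity in the representation ring of $G$. Choose a CW structure on $M$ and lift it to $\tilde{M}$; since $p:\tilde{M}\to M$ is a regular covering with deck group $G$ acting freely, each rational cellular chain group $C_i(\tilde{M};\Q)$ is a free $\Q[G]$-module whose rank equals the number of $i$-cells of $M$. Passing to the Grothendieck group $R_\Q(G)$ of finite-dimensional $\Q[G]$-modules yields
\[\sum_i (-1)^i [H_i(\tilde{M};\Q)] \;=\; \sum_i (-1)^i [C_i(\tilde{M};\Q)] \;=\; \chi(M)\,[\Q[G]].\]
Because $\Q[G]$ is semisimple (Maschke), isomorphism classes of finite-dimensional $\Q[G]$-modules are determined by their class in $R_\Q(G)$. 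So it suffices to identify $[H_0(\tilde{M};\Q)]$ and $[H_2(\tilde{M};\Q)]$ and then read off $[H_1(\tilde{M};\Q)]$.

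The zeroth homology is always the trivial representation $\Q$, since $\tilde{M}$ is connected. For $H_2(\tilde{M};\Q)$, it vanishes when $\tilde{M}$ is non-orientable, and is one-dimensional when $\tilde{M}$ is orientable, with $G$ acting through the character $g\mapsto \pm 1$ according to whether the deck transformation $g$ preserves or reverses a chosen orientation of $\tilde{M}$. Substituting $\chi(\varSigma_g)=2-2g$ and $\chi(N_{g+1})=1-g$ into the Euler identity will then give formulas (a), (b), (c) for $[H_1]$, and hence by semisimplicity the claimed isomorphism types.

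The one substantive point is identifying the orientation character in case (c). In case (a), $M$ and $\tilde{M}$ are both orientable and one can choose orientations so that $p$ is orientation-preserving; then every deck transformation preserves orientation and the character on $H_2$ is trivial, giving $[H_1]=2[\Q]+(2g-2)[\Q[G]]$. In case (b), $H_2(\tilde{M};\Q)=0$ immediately yields $[H_1]=[\Q]+(g-1)[\Q[G]]$. In case (c), since a connected cover is orientable iff its fundamental group lies in the unique index-$2$ orientation subgroup $H\cong \pi_1(\varSigma_g)$ of $\pi$, the hypothesis that $\tilde{M}$ is orientable forces $U\subseteq H$. A deck transformation represented by $a\in\pi$ then preserves the orientation of $\tilde{M}$ precisely when $a\in H$, that is, when $v(a)\in v(H)$. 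Consequently the representation on $H_2(\tilde{M};\Q)$ is the one-dimensional sign character with kernel $v(H)$, which is exactly $\Q^-$, and the Euler identity gives $[H_1]=[\Q]+[\Q^-]+(g-1)[\Q[G]]$.

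The main potential obstacle is this last identification in case (c) — verifying that the sign of $g$ on $H_2(\tilde{M};\Q)$ matches membership in $v(H)$; once that is in hand, everything is a bookkeeping consequence of the Euler identity in $R_\Q(G)$ and the semisimplicity of $\Q[G]$.
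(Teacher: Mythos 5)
Your proof is correct, but it takes a genuinely different route from the paper's. The paper removes an open disk $D$ from $M$, sets $X:=M\smallsetminus D$ and $\tilde X:=p^{-1}(X)$, quotes Gasch\"utz's theorem to obtain $H_1(\tilde X)\cong\Q\oplus\Q[G]^{g}$ (the free-group case), and then reads off $H_1(\tilde M)$ from the exact homology sequence of the pair $(\tilde M,\tilde X)$, using $H_i(\tilde M,\tilde X)\cong H_i(\tilde M/\tilde X)$ with $\tilde M/\tilde X$ a wedge of $|G|$ copies of $S^2$; only case (b) is written out, with (a) and (c) declared similar. You instead lift a cell structure of $M$ to a free $G$-equivariant one on $\tilde M$, so that each $C_i(\tilde M;\Q)$ is $\Q[G]$-free of rank the number of $i$-cells of $M$, and determine $[H_1]$ from the Euler-characteristic identity in $R_\Q(G)$ once $H_0$ and $H_2$ are identified as $G$-modules. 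The two arguments are close relatives -- your chain-level Euler computation is essentially the proof of Gasch\"utz's theorem adapted directly to the closed surface -- but yours is more self-contained (no external citation, no pair sequence) and treats (a), (b), (c) uniformly from $\chi(\varSigma_g)=2-2g$ and $\chi(N_{g+1})=1-g$, at the price of having to pin down the $G$-action on $H_2(\tilde M;\Q)$, which is exactly where $\Q^-$ enters in case (c). Your identification there is right, and you correctly isolate it as the substantive point: orientability of $\tilde M$ forces $U\subseteq H$, so $\tilde M\to N_{g+1}$ factors through the orientation double cover $\varSigma_g\to N_{g+1}$, and the deck transformation attached to $v(a)$ preserves the orientation pulled back from $\varSigma_g$ exactly when $a$ maps trivially to $\pi/H$, i.e.\ when $a\in H$, equivalently $v(a)\in v(H)$ (this equivalence uses $U\subseteq H$). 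It would be worth writing out that one-line factorization argument, since ``then preserves the orientation precisely when $a\in H$'' is the only step not spelled out; note that the paper's exact-sequence proof would need the same input for (c), where the sequence begins with $H_2(\tilde M)\neq 0$.
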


\begin{proof}
We show (b) which is the assertion which is relevant for the purposes of the present paper; (a) and (c) are similar. Choose an open disk $D$
in $M$, let $X:=M \smallsetminus D$ and $\tilde{X}:=p^{-1}(X)$. Look at the exact sequence of the pair $(\tilde{M},\tilde{X})$:
\[0 \rightarrow H_2(\tilde{M},\tilde{X}) \rightarrow H_1(\tilde{X})\rightarrow H_1(\tilde{M})\rightarrow H_1(\tilde{M},\tilde{X})\rightarrow 0\;.\]
This is an exact sequence of $\Q[G]$-modules. The fundamental group of $X$ is a free group of rank $g+1$, and $\tilde{X}$ is connected. Hence,
by Gaschütz theory \cite{Ga}, as in \cite{GL}, we have 
\[H_1(\tilde{X}) \cong \Q\oplus \Q[G]^g\,.\]
We have $H_i(\tilde{M},\tilde{X}) \cong H_i(\tilde{M}/\tilde{X})$, and $\tilde{M}/\tilde{X}$ is a wedge of $|G|$ copies of $S^2$ with 
$H_2(\tilde{M}/\tilde{X}) \cong \Q[G]$ and $H_1(\tilde{M}/\tilde{X})=0$.
\end{proof}

From now on, we assume that $M$ is a non-orientable surface. To prove our main theorem, we consider the case in which $G$ is a product of two
cyclic groups. To do that, we have first to look at the orientation covering of $M$. This is of course also a special instance of our 
general setting; however, it doesn't lead to new representations.

\subsection{The orientation covering}\label{orient}

We consider the orientation covering $p:\varSigma \rightarrow N$ of the closed non-orientable surface $N:= N_{g+1}$. The total space
$\varSigma$ is the orientable surface of genus $g$. The image of $p_{\ast}: \pi_1(\varSigma,x_0) \rightarrow \pi_1(N,y_0)=:\pi$ is the
unique characteristic subgroup $U$ of $\pi$ of index $2$.

\smallskip
Let us briefly illustrate the general method of Section \ref{general} in this situation: The corresponding group $G$ is the cyclic group
$C_2$ and $v:\pi \rightarrow C_2$ is the homomorphism with kernel $U$. We have $\Gamma(v)= \Aut \,\pi$; this is quite exceptional. Let
$j:\varSigma \rightarrow \varSigma$ be the non-trivial deck transformation. Then 
\[H_1 (\varSigma; \Q)= H^+ \oplus H^-\, ,\]
where $H^{\pm}$ is the $\pm 1$-eigenspace of $j_{\ast}$. It is easy to see that
\[\dim H^+ = \dim H^- =g\]
and that $p_{\ast} \mid H^+$ is an isomorphism of $H^+$ onto $H_1(N; \Q)$. By (\ref{S3}) we get then two $g$-dimensional representations 
\[
\rho^{\pm}:=\rho_v^{H^{\pm}}: \Aut \, \pi \rightarrow \Aut_{\Q}\,H^{\pm}\; .
\]
The representation $\rho^+$ is the standard representation of $\Aut \, \pi$ on $H_1(N;\Q)$, and it is easy to see that $\rho^-$ is 
equivalent to the dual representation of $\rho^+$. If we identify $H_1(N;\Q)$ with $\Q^g$ in the usual way, it is well known (\cite{MP})
that $\rho^+(\Aut\,\pi)$ is of finite index in $\GL(g,\Z)$.

\smallskip
There are related representations of the mapping class group $\M(N)$ on $H^{\pm}$ for which the corresponding statements hold.

\medskip
For later purposes, it is essential to have a very precise description of \linebreak $p_{\ast}: \pi_1(\varSigma,x_0)\rightarrow \pi_1(N,y_0)$.
For simplicity, we restrict from now on to the case that $g$ is even, say $g =2r$. 

\smallskip
We embed $\varSigma$ in $\R^3$ symmetrically as shown in Fig. 1.

\begin{center}
\hspace*{50px} 
\includegraphics[width=264px,height=175px]{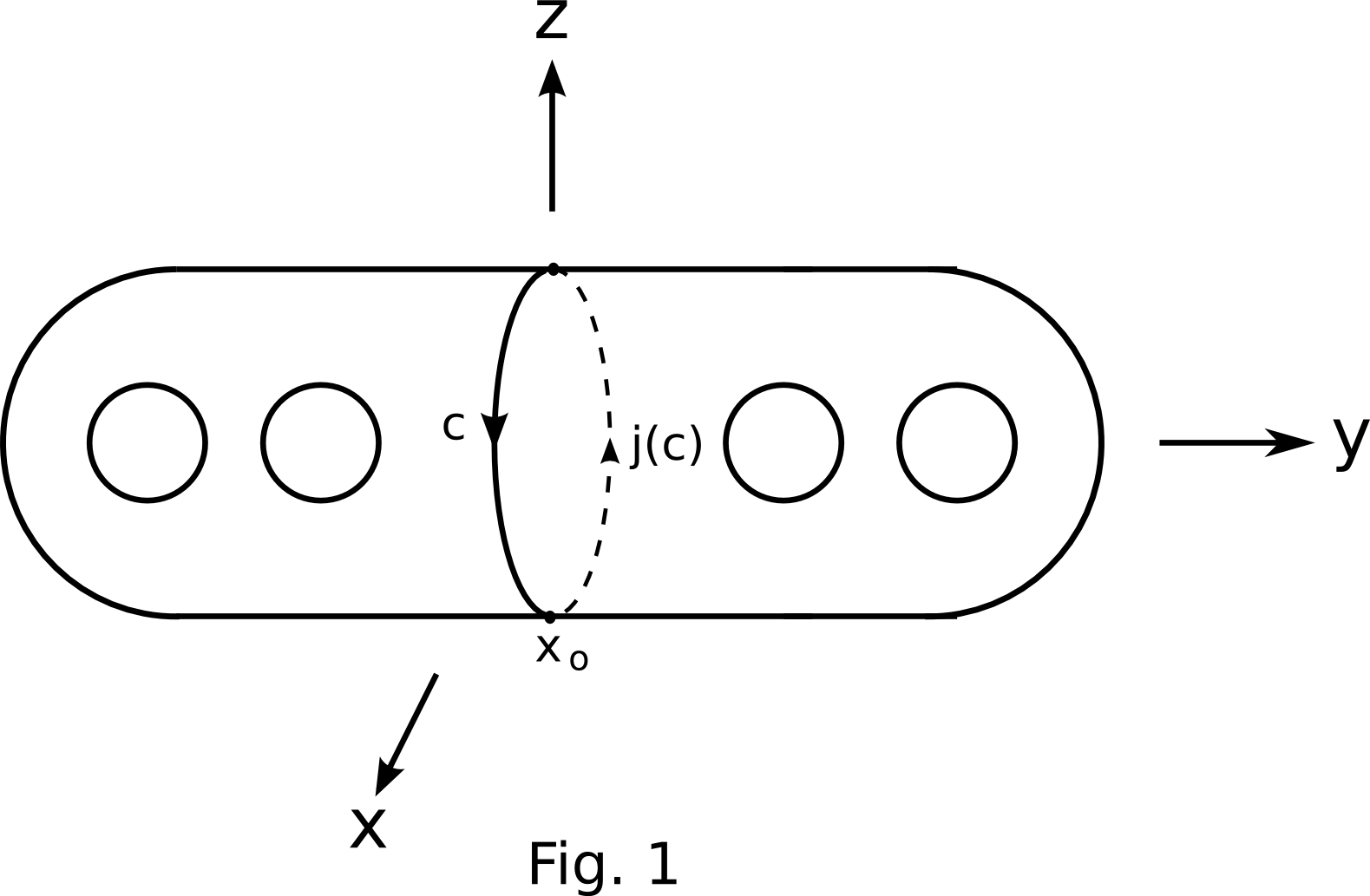}
\end{center}

As deck transformation $j:\varSigma \rightarrow \varSigma$ we take the map given by $j(x,y,z)= (-x,-y,-z)$. On $\varSigma$, we consider the arc
$c$ leading from $j(x_0)$ to $x_0$ and the closed curves $a_1, b_1, \ldots , a_g, b_g$ indicated in Fig. 2.

\begin{center}
 \includegraphics[width=11.5cm,height=3.7cm]{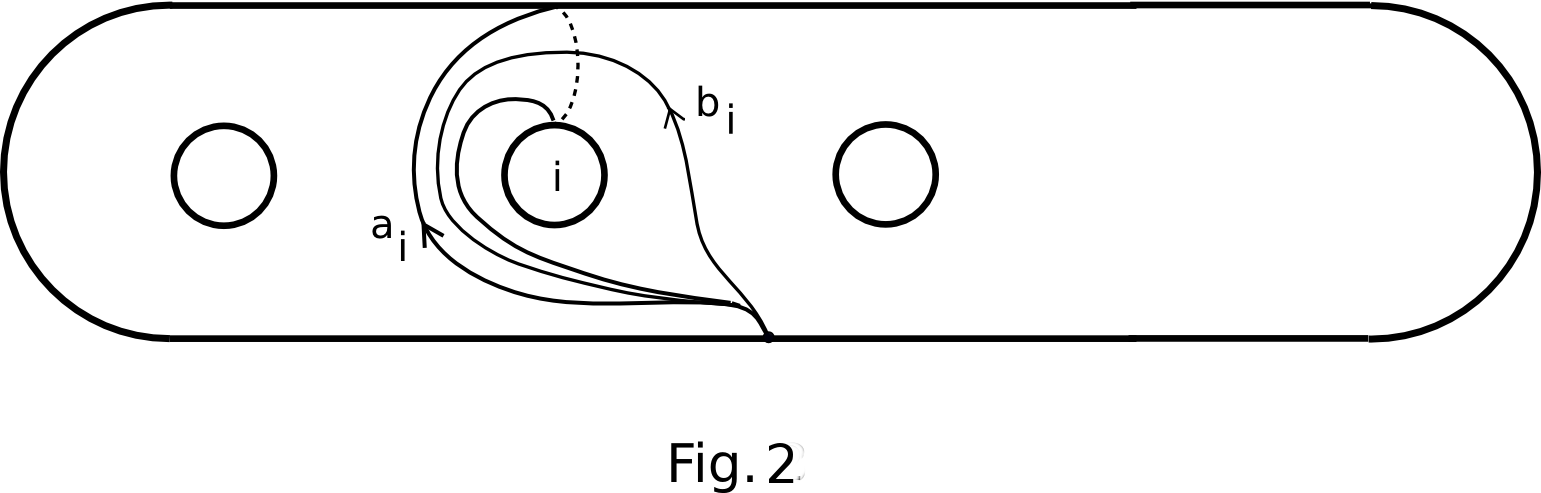}
\end{center}

Then
\[ \pi_1(\varSigma,x_0)=<a_1,b_1,\ldots, a_g,b_g \mid [a_1,b_1] \ldots [a_g,b_g]=1>\]

where $[a,b] := aba^{-1}b^{-1}$ and where we have used the same letter for a curve and its homotopy class. By a further abuse of notation,
we denote the images of $a_1,b_1, \ldots, a_r, b_r,c$ under $p$ and also their homotopy classes in $\pi$ again by the same letters. (Observe
that in $\pi$ we have only half of all the $a_i,b_i$.)

\begin{lemma}\label{pstar}
 We have a presentation 
\[\pi_1(N_{g+1},y_0)=<a_1,b_1,\ldots,a_r,b_r,c \mid [a_1,b_1] \ldots [a_r,b_r]=c^{2}> \;.\]

The homomorphism $p_{\ast}: \pi_1(\varSigma,x_0)\rightarrow \pi_1(N,y_0)$ is given by the following formulas which hold for $1 \leq i \leq r$.

\[\begin{array}{rcl}
 p_{\ast}(a_i)&=&a_i \; ,\\[1ex]
 p_{\ast}(b_i)&=&b_i \; ,\\[1ex]
 p_{\ast}(a_{2r+1-i}) &=& (cb_rb_{r-1}\ldots b_{i+1})\cdot [a_{i+1},b_{i+1}]\ldots [a_r,b_r]\\[1ex]
&& \cdot\, c^{-2}\cdot b_ia_i^{-1}b_i^{-1} \cdot (cb_rb_{r-1}\ldots b_{i+1})^{-1}\; ,\\[1ex]
p_{\ast}(b_{2r+1-i})& =& (cb_rb_{r-1} \ldots b_ia_i)\cdot b_i \cdot (cb_rb_{r-1} \ldots b_ia_i)^{-1}\; .
\end{array}\]
\end{lemma}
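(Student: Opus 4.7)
The plan is to split the proof into two parts: first, establishing the presentation of $\pi = \pi_1(N_{g+1},y_0)$ from a standard connect-sum decomposition of $N$, and second, deriving the $p_*$-formulas by pushing the ``far-side'' loops $a_{2r+1-i}, b_{2r+1-i}$ back to the basepoint along an explicit path on $\varSigma$ that realises the symmetry $j$.

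For the presentation, since $g = 2r$ we have $N_{g+1} \cong \varSigma_r \# \R P^2$, and a compatible CW decomposition has one 0-cell $y_0$, the $2r+1$ 1-cells $a_1, b_1, \ldots, a_r, b_r, c$ (with $a_i, b_i$ coming from the genus-$r$ piece and $c$ the core circle of the attached M\"obius band), and one 2-cell attached along the boundary word $[a_1,b_1]\cdots[a_r,b_r] c^{-2}$; van Kampen's theorem then gives the asserted presentation. The identities $p_*(a_i)=a_i$, $p_*(b_i)=b_i$ for $i\leq r$ are tautological, since the loops on $\varSigma$ with these names were chosen in Fig.\ 2 as lifts of the namesake loops on $N$.

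The content of the lemma lies in the remaining two formulas. Since $j$ swaps the handle of index $i$ with that of index $2r+1-i$, the loops $a_{2r+1-i}$ and $b_{2r+1-i}$ on $\varSigma$ agree, as free homotopy classes, with $j$-translates of $a_i^{\pm 1}$ and $b_i$. To promote this to an identity in $\pi_1(\varSigma,x_0)$, I would conjugate the corresponding $j$-translate (a loop at $j(x_0)$) by an explicit path $\alpha$ from $x_0$ to $j(x_0)$ read off Fig.\ 2: for the $a$-formula, $\alpha$ runs from $x_0$ across the near-side handles associated with $b_r, b_{r-1},\ldots,b_{i+1}$ and then along the reverse of the arc $c$ to $j(x_0)$; for the $b$-formula, $\alpha$ additionally sweeps across the handle at level $i$, contributing an extra $b_i a_i$ segment. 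Applying $p_*$, and using $p\circ j = p$ together with $p_*(c)=c$, converts the projection of $\alpha$ into $cb_r\cdots b_{i+1}$ (respectively $cb_r\cdots b_i a_i$) and the projection of the $j$-translated inner loop into the stated inner word; in particular, the factor $[a_{i+1},b_{i+1}]\cdots[a_r,b_r] c^{-2}$ appearing in the formula for $p_*(a_{2r+1-i})$ is precisely the commutator debt accumulated by the conjugating path as it sweeps past the intermediate near-side handles.

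The principal difficulty is this picture-dependent bookkeeping: one must verify, using Fig.\ 2, that the homotopy from the naively conjugated $j$-translate to $a_{2r+1-i}$ (respectively $b_{2r+1-i}$) picks up exactly the claimed commutators and no others. A convenient sanity check, which both verifies the formulas and, together with the constraint that every $p_*$-image must lie in $U = \ker v$ (i.e., have even total exponent in $c$), essentially pins them down, is to confirm that $p_*$ sends the global surface relation $[a_1,b_1]\cdots[a_{2r},b_{2r}] = 1$ of $\pi_1(\varSigma)$ to the defining relation $[a_1,b_1]\cdots[a_r,b_r] = c^{2}$ of $\pi$.
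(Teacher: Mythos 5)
Your proposal is correct and follows essentially the same route as the paper, whose entire proof is the remark that the last two formulas ``are obtained by elementary geometric considerations'': your path-conjugation bookkeeping from Fig.~2 is exactly the elaboration of those considerations, and your van Kampen argument for the presentation plus the check that $p_*$ carries the relation $[a_1,b_1]\cdots[a_{2r},b_{2r}]=1$ to $[a_1,b_1]\cdots[a_r,b_r]=c^2$ are sensible additions rather than a different method.
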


\begin{proof}
 Only the last two assertions need proof. They are obtained by elementary geometric considerations.
\end{proof}

\subsection{Products of two cyclic groups} \label{products}

Now we specialize the setting of subsection \ref{general} by taking $G= C_h \times C_k$ for two natural numbers $h,k$. We consider $C_h$
and $C_k$ as subgroups of $\C^\times$ generated by $\zeta:= e^{2\pi i/ h}$ and $\eta:=e^{2 \pi i/k}$. We take $N= N_{2r+1}$ with $r>0$,
denote its fundamental group by $\pi$ and consider the covering space $p:\tilde{N}\rightarrow N$ defined by the homomorphism 
$v:\pi \rightarrow G$ which, with the notation of Lemma \ref{pstar}, is given by $v(a_1)= (\zeta,1)\,,\, v(b_1)= (1, \eta)$; the other generators
are sent to $1$. It is helpful (though perhaps not really necessary) that one can ''see'' the $G$-space $\tilde{N}$ of which we give now
a geometric description:

\smallskip
Consider $G$ as a subgroup of the torus $T= S^1 \times S^1$. Choose a small open disk $D$ around $(1,1)$ in $T$ such that the disks $g \cdot D$
are pairwise disjoint for $g \in G$. Let $T'$ be $T$ with these $hk$ disks removed. On the other hand, consider the surface
\[ X= T\,\sharp \ldots \sharp\, T\,\sharp \,P^2 \]

which is the connected sum of $r-1$ copies of $T$ and of the projective plane. Observe that $N=T\, \sharp\, X$. Let $X'$ be $X$ with one disk removed.
Then $\tilde{N}$ is obtained by equivariantly gluing $X' \times G$ to $T'$ along their boundaries. More precisely, choose a homeomorphism 
$\varphi$ from $\partial X'$ to $\partial D$, extend $G$-equivariantly to obtain a homeomorphism $\varphi$ from $\partial (X' \times G)$ to
$\partial T$ and form
\[\tilde{N}:= T' \cup_{\varphi} (X' \times G) \,.\]

In an obvious manner, we obtain a free operation of $G$ on $\tilde{N}$. 

\medskip
The generators $(\zeta^{-1},1)$ and $(1,\eta^{-1})$ of $G$ define diffeomorphism of $\tilde{N}$ which we denote by $j_1$ and $j_2$. Let us 
simply write $H_1(\tilde{N})$ for the homology group with coefficients in the cyclotomic number field $\Q(\zeta,\eta)$. With $g=2r$, we have
\begin{eqnarray}
 \dim H_1(\tilde{N})= (g-1) \cdot hk +1\,. \label{S5}
\end{eqnarray}

Let $J_i$ be the automorphism $(j_i)_{\ast}$ of $H_1(\tilde{N})$. For $(\alpha, \beta)\in \Z /h \times \Z /k$, put 
\begin{eqnarray}
 H^{\alpha,\beta}:= \{u \in H_1(\tilde{N}) \mid J_1(u)= \zeta^{\alpha} u\,,\, J_2(u)= \eta^{\beta}u\}\;.\label{S6}
\end{eqnarray}

From Proposition \ref{structure} we know that 
for $(\alpha, \beta)\in \Z /h \times \Z /k$, we have 
\begin{equation}\label{S6.1}
\dim H^{\alpha,\beta} = \left \{ \begin{array}{cll}
                                    g& \text{if} & (\alpha,\beta)=(0,0)\;,\\
                                    g-1 & \text{if} & (\alpha,\beta)\neq (0,0)\;.
                                   \end{array}
\right .
\end{equation}

From (\ref{S3}), we get $hk$ homomorphisms 
\begin{eqnarray}
 \rho_v^{H^{\alpha,\beta}}:\Gamma (v)\rightarrow \Aut_{\Q(\zeta,\eta)}\, H^{\alpha,\beta}\;. \label{S12}
\end{eqnarray}

In order to consider $\Aut_{\Q(\zeta,\eta)}\, H^{\alpha,\beta}$ as a matrix group, we have to choose a basis of $H^{\alpha,\beta}$.
We begin by specifying generators for the vector space $H_1(\tilde{N})$.
As usual, let $U$ be the kernel of $v$ and denote by 
\[ q:U \rightarrow U^{\ab} \otimes_{\Z} \Q(\zeta,\eta)= H_1(\tilde{N})\]

the canonical projection. Put 
\begin{eqnarray}
 A:= q(a_1^h)\,, \; B:=q(b_1^k)\;. \label{S7}
\end{eqnarray}

For $d \in \{a_2, b_2, \ldots , a_r, b_r, c\}$ let $D \in \{A_2, B_2, \ldots , A_r, B_r, C\}$ be the corresponding symbol whith capital letter.
Then, for $\nu,\mu \in \Z$, let 
\begin{eqnarray}
 D^{\nu,\mu}:= q(a_1^{\nu}\,b_1^{\mu}\,d\,b_1^{-\mu}\,a_1^{-\nu})\,. \label{S8}
\end{eqnarray}

We have 
\begin{eqnarray}
 J_1(D^{\nu,\mu}) =D^{\nu-1,\mu}\; , \; J_2(D^{\nu,\mu}) =D^{\nu,\mu-1}\; . \label{S9}
\end{eqnarray}

The element $D^{\nu,\mu}$ depends only on the residue classes of $\nu$ mod $h$ and $\mu$ mod $k$.
Therefore, from now on, when we write $D^{\nu,\mu}$, the pair $(\nu,\mu)$ will be an element of $\Z /h \times \Z /k$.

\smallskip
Given our geometric description of $\tilde{N}$, the reader should be able to see all these homology classes. The $(g-1)hk+2$ elements 
$A,B,D^{\nu,\mu}$ generate $H_1(\tilde{N})$; there is one relation, namely 
\begin{eqnarray}
 \sum_{\nu,\mu}\,C^{\nu,\mu}=0 \;. \label{S10}
\end{eqnarray}

From (\ref{S9}), we see:

\begin{prop}\label{basis}
For $(\alpha,\beta)\neq (0,0)$ the elements 
 \begin{eqnarray}
 \hat{D}^{\alpha,\beta}:=\sum_{\nu,\mu}\, \zeta^{\alpha\nu}\,\eta^{\beta\mu}\,D^{\nu,\mu}\;. \label{S11}
\end{eqnarray}
with $D \in \{A_2, B_2, \ldots , A_r, B_r, C\}$ form a basis of $H^{\alpha,\beta}$.\hfill $\Box$
\end{prop}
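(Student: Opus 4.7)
The plan is to realise the $\hat{D}^{\alpha,\beta}$ as eigenvectors in $H^{\alpha,\beta}$ and then reduce to a dimension count together with linear independence. The eigenvector property is immediate from (\ref{S9}): applying $J_1$ to $\hat{D}^{\alpha,\beta}$ produces $\sum_{\nu,\mu} \zeta^{\alpha\nu}\eta^{\beta\mu} D^{\nu-1,\mu}$, and the reindexing $\nu \mapsto \nu+1$ identifies this with $\zeta^\alpha \hat{D}^{\alpha,\beta}$; the computation for $J_2$ is symmetric. Since $D$ ranges over a set of $g-1$ symbols, and since (\ref{S6.1}) tells us $\dim H^{\alpha,\beta} = g-1$ for $(\alpha,\beta)\neq (0,0)$, it will suffice to establish that the $g-1$ vectors $\hat{D}^{\alpha,\beta}$ are linearly independent.

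For independence I would suppose $\sum_D c_D \hat{D}^{\alpha,\beta}=0$ in $H_1(\tilde{N})$ and expand into the spanning set $\{A,B,D^{\nu,\mu}\}$, obtaining
$$\sum_D \sum_{\nu,\mu} c_D\, \zeta^{\alpha\nu}\eta^{\beta\mu}\, D^{\nu,\mu}\;=\;0.$$
Because (\ref{S10}) is the \emph{only} relation among these generators, this identity, viewed in the free $\Q(\zeta,\eta)$-module on $\{A,B,D^{\nu,\mu}\}$, must be of the form $\lambda \sum_{\nu,\mu} C^{\nu,\mu}$ for some scalar $\lambda \in \Q(\zeta,\eta)$. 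Reading off coefficients on $D^{\nu,\mu}$ for each $D \in \{A_2,B_2,\ldots,A_r,B_r\}$ then forces $c_D=0$, while reading off the coefficient on $C^{\nu,\mu}$ yields $c_C\, \zeta^{\alpha\nu}\eta^{\beta\mu}=\lambda$ for every $(\nu,\mu)$. Since $(\alpha,\beta)\neq (0,0)$ the exponential factor takes at least two distinct values as $(\nu,\mu)$ varies, so $c_C$, and $\lambda$, must vanish as well.

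I do not foresee a real obstacle here: the entire content sits in (\ref{S6.1}), (\ref{S9}) and the uniqueness of the relation (\ref{S10}), all already in hand. The only delicate point is the passage from a vanishing in $H_1(\tilde{N})$ to an equation in the free module on the spanning set; this is exactly what the uniqueness of the relation authorises, and it is the one place in the argument worth stating explicitly.
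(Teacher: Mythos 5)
Your proof is correct and takes essentially the same route the paper intends: the paper's whole argument is the terse ``From (\ref{S9}), we see:'' immediately after recording the generating set $\{A,B,D^{\nu,\mu}\}$ and the unique relation (\ref{S10}), and you have simply spelled out the eigenvector check, the dimension count from (\ref{S6.1}), and the linear independence via the uniqueness of (\ref{S10}). The one delicate point you flag --- lifting the vanishing from $H_1(\tilde N)$ to the free module, authorised precisely because the relation module is one-dimensional --- is exactly the step the paper leaves implicit, and you handle it correctly.
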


\medskip

Let us denote by $R$ the ring of integers in $\Q(\zeta,\eta)$. According to Proposition \ref{basis}, we can consider the homomorphism (\ref{S12}) as a
representation 
\begin{eqnarray}
 \rho^{\alpha,\beta}: \Gamma(v) \rightarrow \GL (g-1,R)\;. \label{S13}
\end{eqnarray}

We modify $\rho^{\alpha,\beta}$ to obtain a homomorphism with values in $\SL(g-1,R)$ following \cite{GL}: By the Dirichlet unit theorem,
there exists a torsion free subgroup $F$ of $R^\times$ of finite index. Consider the homomorphism 
\[\begin{array}{lccc}
   \varPhi:& R^\times \times \SL(g-1,R)& \rightarrow & \GL(g-1,R)\\[1ex]
   &(z,A)&\mapsto & zA\;.
  \end{array}\]

Then $\varPhi \mid F \times \SL(g-1,R)$ is injective, and $\Delta:=\varPhi(F\times\SL(g-1,R))$ is of finite index in $\GL(g-1,R)$. Let
\begin{eqnarray}
 \tilde{\Gamma}:=\bigcap_{(\alpha,\beta)\neq(0,0)}(\rho^{\alpha,\beta})^{-1}(\Delta)  \label{S14} 
\end{eqnarray}

and define
\begin{eqnarray}
\tilde{\sigma}^{\alpha,\beta}:= \text{pr}_2 \circ \varPhi^{-1} \circ (\rho^{\alpha,\beta} | \tilde{\Gamma}):\tilde{\Gamma}\rightarrow \SL(g-1,R)\;. \label{S15} 
\end{eqnarray}

Observe that $\tilde{\Gamma}$ is a finite index subgroup of $\Aut \, \pi$. We wish to obtain a representation of a finite index
subgroup of the mapping class group $\Out \, \pi = \Aut \,\pi/ \Inn \,\pi$: Since $G$ is abelian, we see from (\ref{S1}) that 
\[ \Inn\, \pi \subseteq \Gamma(v)\;.\]

\begin{lemma}\label{inner}
 There exists a finite index subgroup $\tilde{\Gamma}'$ of $\tilde{\Gamma}$ such that, for 
  $(\alpha,\beta) \neq (0,0)$, we have 
 \[\tilde{\Gamma}'\cap \Inn \, \pi \, \subseteq \, \ker \,\tilde{\sigma}^{\alpha,\beta}\;.\]
\end{lemma}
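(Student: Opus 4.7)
The plan is to first identify $\tilde\sigma^{\alpha,\beta}(\iota_w)$ for inner automorphisms $\iota_w\in\tilde\Gamma\cap\Inn\,\pi$ as a scalar matrix whose scalar is a root of unity in $R$, and then to cut these scalars out by passing to an appropriate congruence subgroup.

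First I would show that $\tilde\sigma^{\alpha,\beta}(\iota_w)=\chi^{\alpha,\beta}(v(w))\cdot I$, where $\chi^{\alpha,\beta}$ is the character by which $G$ acts on the isotypical component $H^{\alpha,\beta}$. Indeed, by (\ref{S0}) the automorphism $\iota_w$ operates on $U^{\ab}$ as $q(u)\mapsto v(w)\cdot q(u)$, so on $H^{\alpha,\beta}$ it is multiplication by the root of unity $\lambda:=\chi^{\alpha,\beta}(v(w))\in\mu(R)$. For this scalar matrix $\lambda I$ to belong to $\Delta=\Phi(F\times\SL(g-1,R))$, taking determinants in $\lambda I=zA$ with $z\in F$, $A\in\SL(g-1,R)$ gives $\lambda=z\xi$ with $\xi\in\mu_{g-1}$; since $\lambda$ is a root of unity and $F$ is torsion-free, $z$ is forced to be $1$ and $\lambda\in\mu_{g-1}\subset\mu(R)$. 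Hence $\tilde\sigma^{\alpha,\beta}(\iota_w)=\lambda I$ with $\lambda\in\mu(R)$.

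Next I would pick a rational prime $p$ coprime to $|\mu(R)|$ and a prime ideal $\mathfrak{p}\subset R$ above $p$. Because $\mu(R)$ is a finite cyclic group of order coprime to $p$, the reduction map $R\to R/\mathfrak{p}$ restricts to an injective homomorphism on $\mu(R)$ (separability of $X^{|\mu(R)|}-1$ in characteristic $p$). I would then set
\[
\tilde\Gamma':=\bigcap_{(\alpha,\beta)\neq(0,0)}(\tilde\sigma^{\alpha,\beta})^{-1}\bigl(\ker(\SL(g-1,R)\to\SL(g-1,R/\mathfrak{p}))\bigr)\, ,
\]
which is a finite intersection of kernels of homomorphisms from $\tilde\Gamma$ into finite groups, hence of finite index in $\tilde\Gamma$.

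To finish, take any $\iota_w\in\tilde\Gamma'\cap\Inn\,\pi$ and any $(\alpha,\beta)\neq(0,0)$: the scalar $\chi^{\alpha,\beta}(v(w))\in\mu(R)$ reduces to $1$ modulo $\mathfrak{p}$ and so equals $1$ in $R$ by the injectivity above. Since the nontrivial characters of $G=C_h\times C_k$ separate its points, this forces $v(w)=1$, i.e.\ $w\in U$, and then $\tilde\sigma^{\alpha,\beta}(\iota_w)=I$. I expect the main technical point to be the first step---in particular, using the torsion-freeness of $F$ (which comes from the Dirichlet unit theorem) to pin the scalar down to $\mu_{g-1}\subset\mu(R)$; once this is established the congruence subgroup construction is routine.
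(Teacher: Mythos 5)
Your proof is correct, and its first half --- identifying $\tilde\sigma^{\alpha,\beta}(\iota_w)$ as a scalar matrix $\lambda I$ with $\lambda$ a root of unity, via the determinant argument exploiting the torsion-freeness of $F$ --- is exactly the computation in the paper's proof. Where you diverge is in the second half: the paper invokes Selberg's lemma to get a torsion-free finite-index subgroup $\Delta'\subseteq \SL(g-1,R)$ and sets $\tilde\Gamma'=\bigcap_{(\alpha,\beta)\neq(0,0)}(\tilde\sigma^{\alpha,\beta})^{-1}(\Delta')$, so that the (torsion) images of inner automorphisms lying in $\tilde\Gamma'$ are forced to be trivial; you instead take for $\Delta'$ the explicit principal congruence subgroup modulo a prime $\mathfrak{p}$ whose residue characteristic is coprime to $|\mu(R)|$, and use injectivity of reduction on roots of unity. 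Your route is more self-contained --- it needs only the elementary fact that prime-to-$p$ roots of unity inject into the residue field, not Selberg's theorem --- at the cost of being slightly longer; the paper's version is shorter by citation and disposes of all torsion at once rather than just the scalar torsion that actually arises. Two cosmetic points: the inclusion $\mu_{g-1}\subset\mu(R)$ need not hold as stated (what you use, and what is true, is that $\lambda\in R^\times$ is a root of unity satisfying $\lambda^{g-1}=1$), and the final appeal to nontrivial characters separating points of $G$ is superfluous --- once $\lambda=1$ for the given $(\alpha,\beta)$ you already have $\tilde\sigma^{\alpha,\beta}(\iota_w)=I$, which is all the lemma requires.
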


\begin{proof}
 Let $\Delta'$ be a torsion-free subgroup of $\SL(g-1, R)$ of finite index (\cite{Se}, Lemma 8) and put 
 \[\tilde{\Gamma}':=\bigcap_{(\alpha,\beta)\neq(0,0)}(\tilde{\sigma}^{\alpha,\beta})^{-1}(\Delta')\,.\]

 For $\delta \in \Inn \,\pi$, it is clear that $\rho^{\alpha,\beta}(\delta)=\gamma\, I_{g-1}$ with $\gamma^{hk}=1$. If in addition
 $\delta \in \tilde{\Gamma}$, we have also $\rho^{\alpha,\beta}(\delta)=zA$ with $z \in F$ and $A \in \SL(g-1,R)$, and by looking at
 determinants, we see that $z$ is torsion, hence $z=1$ and $\rho^{\alpha,\beta}(\delta) \in \SL(g-1,R)$. Therefore
 $\tilde{\sigma}^{\alpha,\beta}(\delta)= \rho^{\alpha,\beta}(\delta)$ is torsion. The assertion follows.
\end{proof}

Denoting the image of $\tilde{\Gamma}'$ in $\Out\, \pi$ by $\Gamma$, the homomorphism $\tilde{\sigma}^{\alpha,\beta}$ induces
by Lemma \ref{inner} for $(\alpha,\beta) \neq (0,0)$  a representation 
\begin{eqnarray}
 \sigma^{\alpha,\beta}: \Gamma \rightarrow \SL(g-1,R)\;. \label{S16}
\end{eqnarray}

\section{Explicit formulas for automorphisms of the fundamental group} \label{explicit}

As in subsection \ref{orient}, we consider the orientation covering $p:\varSigma \rightarrow N$ of $N= N_{g+1}$. We fix points $y_0 \in N$
and $x_0 \in p^{-1}(y_0)$ and continue to write $\pi:= \pi_1(N,y_0)$. There is a canonical isomorphism between the homeomorphism group $\Ho\,N$
and the subgroup 
\begin{eqnarray}
 S⁺ :=\{h \in \Ho\,\varSigma \mid h \; \text{is orientation preserving and}\; hj =jh\} \label{S17}
\end{eqnarray}

of $\Ho \, \varSigma$. This isomorphism induces, for $g\geq 2$, an isomorphism between the mapping class group $\M(N)$ and a subgroup of
$\M (\varSigma)$. Let 
\begin{eqnarray}
 S_0^+:=\{h \in S⁺ \mid h(x_0) = x_0\}\;. \label{S18}
\end{eqnarray}

Let us recall explicitly a finite set of elements of $S_0^+$ which, according to \cite{BC}, are generators for $\M\,N$:

For a submanifold $\gamma$ of $\varSigma$ diffeomorphic to $S^{1}$, we denote by $T_{\gamma} \in \M (\varSigma)$
the Dehn twist along $\gamma$; we stick to the orientation convention of \cite{Li} and \cite{BC}.
Restricting to the case $g=2r>0$, we have to consider the Dehn twists along the curves $\alpha_i$ $(1 \leq i \leq g)$,
$\beta_i$ $(1 \leq i \leq g)$, $\gamma_i$ $(1 \leq i \leq g-1)$ and $\varepsilon$ shown in Fig.3. (The notation of \cite{BC} differs from
that of \cite{Li}; ours differs from both.) 

\begin{center}
 \includegraphics[width=11.5cm,height=4.5cm]{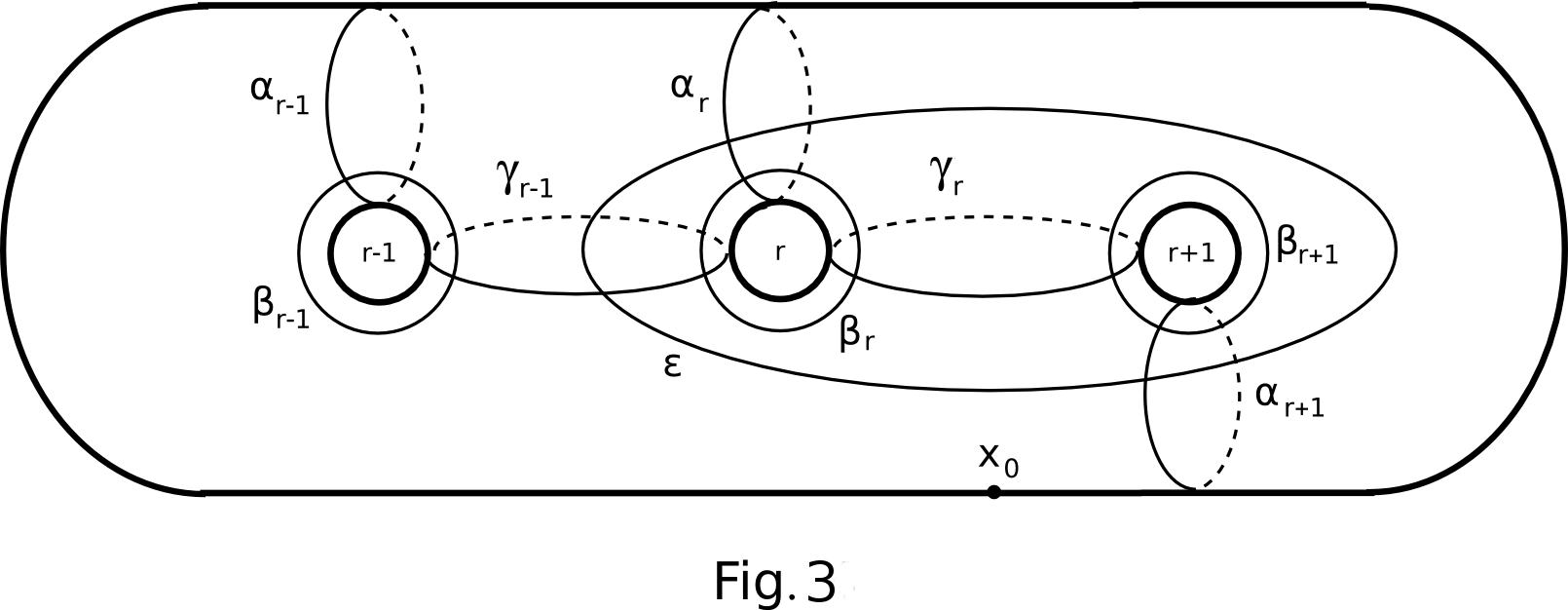}
\end{center}

We consider the finite subset of $S^+_0$ which consists of the elements
\begin{itemize}
 \item $\tilde{R}_i:=T_{\textstyle\alpha_i}\; {\scriptstyle \circ}\; T^{-1}_{\textstyle \alpha_{g+1-i}}\; ,\quad 1 \leq i \leq r\; ,$
 \item $\tilde{S}_i:=T_{\textstyle\beta_i}\; {\scriptstyle \circ}\; T^{-1}_{\textstyle \beta_{g+1-i}}\; ,\quad 1 \leq i \leq r\; ,$
 \item $\tilde{T}_i:=T_{\textstyle\gamma_i}\; {\scriptstyle \circ}\; T^{-1}_{\textstyle \gamma_{g-i}}\; ,\quad 1 \leq i \leq r-1\; ,$
 \item one further homeomorphism $\tilde{Y}$, given by the following lemma.
\end{itemize}

\begin{lemma}\label{ypsilon}
 There is, up to isotopy, a unique homeomorphism $\tilde{Y}$ of $\varSigma$ fixing $x_0$ and commuting with $j$ which is isotopic
to
\[(T_{\gamma_{r}}T_{\beta_{r}}T_{\beta_{r+1}})^{2}\,T^{-1}_{\varepsilon}\]
by an isotopy fixing $x_0$ and $j(x_0)$.
\end{lemma}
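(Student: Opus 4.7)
The plan is to prove existence and uniqueness separately. For existence, I would first verify that the mapping class $[Y_0]$ of the word $Y_0 := (T_{\gamma_r}T_{\beta_r}T_{\beta_{r+1}})^2 T_\varepsilon^{-1}$ in $\M(\varSigma,\{x_0,j(x_0)\})$ is preserved by conjugation with $j$. Since $j$ is orientation-reversing (being the deck involution of the orientation cover of a non-orientable surface), it transforms Dehn twists by $jT_\gamma j^{-1}=T_{j(\gamma)}^{-1}$. Using the symmetries $j(\gamma_r)=\gamma_r$, $j(\varepsilon)=\varepsilon$, $j(\beta_r)=\beta_{r+1}$ and the commutation of $T_{\beta_r}$ with $T_{\beta_{r+1}}$ (disjoint supports), one rewrites
\[ jY_0j^{-1} = (T_{\beta_r}T_{\beta_{r+1}}T_{\gamma_r})^{-2}\, T_\varepsilon; \]
identifying this with $Y_0$ reduces to a chain-type identity in the pure mapping class group of the subsurface of $\varSigma$ bounded by $\varepsilon$, in which $(\gamma_r,\beta_r,\beta_{r+1})$ forms the relevant filling configuration.

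Given this $j$-invariance, $[Y_0]$ descends via the embedding $\M(N)\hookrightarrow\M(\varSigma)$ from subsection \ref{orient} to a mapping class on $N$; any diffeomorphism representative on $N$ admits exactly two $j$-equivariant lifts to $\varSigma$ (differing by composition with $j$), and because $j$ acts freely on $\varSigma$ exactly one lift fixes $x_0$. This defines $\tilde Y$, and the required isotopy of $\tilde Y$ to $Y_0$ rel $\{x_0,j(x_0)\}$ is obtained by lifting an isotopy on $N$.

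For uniqueness, suppose $\tilde Y_1, \tilde Y_2$ both satisfy the conditions of the lemma. Then $\phi := \tilde Y_1\tilde Y_2^{-1}$ is $j$-equivariant, fixes $\{x_0,j(x_0)\}$, and descends to $\bar\phi\in\Diff(N,y_0)$ which is isotopic to $\id$ (both $\tilde Y_i$ descend to the same mapping class on $N$). By Hamstrom's theorem, $\Diff_0(N,y_0)$ is contractible for $g\geq 2$, yielding an isotopy of $\bar\phi$ to $\id$; lifting this isotopy to $\varSigma$ as a family of $j$-equivariant maps and using continuity plus the freeness of the $j$-action produces a $j$-equivariant isotopy of $\phi$ to $\id$ fixing $x_0$, establishing uniqueness.

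The main obstacle is the chain-type relation used in the first step to equate $jY_0j^{-1}$ with $Y_0$: this requires a careful topological analysis of Figure~3 to identify precisely how $\gamma_r,\beta_r,\beta_{r+1},\varepsilon$ sit on $\varSigma$ and to invoke the correct classical relation in the pure mapping class group of the subsurface bounded by $\varepsilon$. The remaining ingredients are standard: equivariant covering space theory supplies the lift, and the classical contractibility of $\Diff_0$ for hyperbolic surfaces handles uniqueness.
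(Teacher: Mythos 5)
Your overall strategy inverts the paper's: the paper (following \cite{BC}, pp.~440--441) starts from an explicitly given homeomorphism of $N$ --- Lickorish's $Y$-homeomorphism (crosscap slide), which by construction has a $j$-equivariant lift fixing $x_0$ --- and the geometric content of the lemma is Birman--Chillingworth's verification that this lift is isotopic to $(T_{\gamma_r}T_{\beta_r}T_{\beta_{r+1}})^2T_\varepsilon^{-1}$, with the only addition here being bookkeeping of the supports of the isotopies so that the isotopy can be taken rel $\{x_0,j(x_0)\}$. You instead start from the twist word $Y_0$ and try to argue it is symmetric. As written this has two genuine gaps. First, the heart of your argument --- the identity $jY_0j^{-1}\simeq Y_0$, which after your (unverified, figure-dependent) symmetry assumptions amounts to the specific relation $(T_{\gamma_r}T_{\beta_r}T_{\beta_{r+1}})^2(T_{\beta_r}T_{\beta_{r+1}}T_{\gamma_r})^2=T_\varepsilon^2$ --- is not proved but only deferred to an unspecified ``chain-type identity''; this is essentially the entire content of the lemma, and it is not a priori the standard chain relation (the regular neighborhood of the $3$-chain $\beta_r,\gamma_r,\beta_{r+1}$ has two boundary components, which for $g\geq 4$ are not isotopic, so one must know exactly what $\varepsilon$ is and which relation applies).

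Second, the descent step is logically incomplete. Knowing that $[Y_0]$ commutes with the class of $j$ does not by itself place $[Y_0]$ in the image of $\M(N)\hookrightarrow\M(\varSigma)$: that embedding only guarantees the image consists of symmetric classes, and the converse (a class commuting with $[j]$ has a representative commuting with $j$ on the nose, moreover fixing $x_0$ and isotopic to the given word rel $\{x_0,j(x_0)\}$) is a Birman--Hilden/Hamstrom-type theorem that you invoke without justification. Relatedly, your sentence that ``the required isotopy of $\tilde Y$ to $Y_0$ rel $\{x_0,j(x_0)\}$ is obtained by lifting an isotopy on $N$'' does not parse: $Y_0$ is not $j$-equivariant, so it induces no map of $N$ and there is no downstairs isotopy to lift; upgrading a free isotopy between $\tilde Y$ and $Y_0$ to one fixing the two marked points needs a separate argument (e.g.\ adjusting by an equivariant point-push, using contractibility of the identity component of $\Ho(\varSigma)$). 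Your uniqueness discussion is fine (and in fact more than the lemma needs, since any two candidates are automatically isotopic rel $\{x_0,j(x_0)\}$ through $Y_0$), but existence, as proposed, rests on the two unestablished steps above, whereas the paper's route gets equivariance for free from the construction on $N$ and only needs the isotopy computation of \cite{BC}.
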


\begin{proof}
 The proof of this lemma, without its last line, is sketched on pages 440-441 of \cite{BC}. The diligent reader can fill in the 
details and keep track of the support of the isotopies. 
\end{proof}

These elements define homeomorphisms of $N$ and these in turn automorphisms of
$\pi$  which we denote respectively by $R_i$ $(1\leq i \leq r)$, $S_i$ $(1\leq i \leq r)$,
$T_i$ $(1\leq i \leq r-1)$ and $Y$. These together with the inner automorphisms generate the group $\Aut\, \pi$. Using Lemma \ref{pstar}, 
it is easy to see how they operate on the generators $a_1,b_1, \ldots , a_r,b_r,c$ of the group $\pi$ (for brevity, we write only
the values of those generators which are not kept fixed):

\begin{prop}\label{autom}
In the group $\pi = \pi_1(N_{2r+1},y_0)$ we introduce the elements
\[w_i:= b_i a_i^{-1} b_i^{-1} a_{i+1}\;, \; 1 \leq i \leq r-1 \; .\]
Furthermore, we write $a:=a_r$ and $b:=b_r$. Whith these notations, we have 

\begin{itemize}
\item  $R_i(b_i) = b_i a_i$

\medskip
\item $S_i(a_i)=a_i b_i^{-1}$
 
\medskip
\item $T_i(b_i)=w_i^{-1}b_i$
\item[] $T_i(a_{i+1})=w_i^{-1}a_{i+1}w_i $
\item[] $T_i(b_{i+1})=b_{i+1}w_i$

\medskip
\item $Y(a)=b^{-1} c^{-1} b a^{-1} b^{-1} c^{-1} b$
\item[] $Y(b)=b^{-1} c b a b a^{-1} b^{-1} c^{-1} b$
\item[] $Y(c)=c^{2} b a b^{-1} a^{-1} b^{-1} c^{-1} b$ \hfill $\Box$
\end{itemize}

\end{prop}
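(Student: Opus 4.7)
The plan is to compute the action of each $\tilde{f}\in\{\tilde{R}_i,\tilde{S}_i,\tilde{T}_i,\tilde{Y}\}$ on $\pi_1(\varSigma,x_0)$ using the standard Dehn twist formulas (as in \cite{Li}, \cite{BC}) and then transfer the answer to $\pi$ via Lemma \ref{pstar}. For a generator $x$ of $\pi$ lying in $U=p_\ast\pi_1(\varSigma,x_0)$ one has $f_\ast(x)=p_\ast(\tilde{f}_\ast(\tilde{x}))$, where $\tilde{x}\in\pi_1(\varSigma,x_0)$ is the preimage of $x$ under $p_\ast$; for the remaining generator $c\in\pi\setminus U$ one lifts to the arc $c\subset\varSigma$ from $j(x_0)$ to $x_0$, uses $\tilde{f}j=j\tilde{f}$ and $\tilde{f}(x_0)=x_0$ to see that $\tilde{f}(c)$ is again an arc from $j(x_0)$ to $x_0$, writes $\tilde{f}(c)=\tilde{\delta}\cdot c$ for a loop $\tilde{\delta}$ at $x_0$ in $\varSigma$, and concludes $f_\ast(c)=p_\ast(\tilde{\delta})\cdot c$.

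For $\tilde{R}_i=T_{\alpha_i}\circ T_{\alpha_{g+1-i}}^{-1}$ one uses $T_{\alpha_j}(b_j)=b_ja_j$ with all other standard generators fixed. The supports of $T_{\alpha_i}$ and $T_{\alpha_{g+1-i}}$ are disjoint annular neighbourhoods of two $j$-symmetric meridians, both disjoint from the arc $c$, so $R_i(c)=c$. The only modifications of $\pi_1(\varSigma)$-generators are $b_i\mapsto b_ia_i$ and $b_{2r+1-i}\mapsto b_{2r+1-i}a_{2r+1-i}^{-1}$; only the first involves one of the generators $a_j,b_j$ ($1\le j\le r$) of $\pi$ used in the proposition, and projecting gives $R_i(b_i)=b_ia_i$. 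The identical argument with $T_{\beta_j}(a_j)=a_jb_j^{-1}$ yields the formula for $S_i$. For $\tilde{T}_i=T_{\gamma_i}\circ T_{\gamma_{g-i}}^{-1}$ with $1\le i\le r-1$, the standard formula for a twist along the chain curve $\gamma_i$ modifies exactly $b_i,a_{i+1},b_{i+1}$ by the $w_i$-conjugations stated in the proposition (and fixes $c$ together with the remaining generators), while $T_{\gamma_{g-i}}^{-1}$ modifies only $b_{g-i},a_{g-i+1},b_{g-i+1}$, whose indices all exceed $r$; applying $p_\ast$ gives the three formulas for $T_i$ verbatim.

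For $Y$ the computation is genuinely longer. The diffeomorphism $\tilde{Y}$ is, up to isotopy, $(T_{\gamma_r}T_{\beta_r}T_{\beta_{r+1}})^2\,T_\varepsilon^{-1}$, and the supports of $T_{\gamma_r}$, $T_{\beta_{r+1}}$ and $T_\varepsilon$ all meet the arc $c$. I would apply the seven Dehn twists one at a time, tracking the images in $\pi_1(\varSigma,x_0)$ of $a=a_r$, $b=b_r$, $a_{r+1}$, $b_{r+1}$ and of the arc $c$ at each step, and then project via $p_\ast$ using the formulas of Lemma \ref{pstar} specialised to $i=r$, namely $p_\ast(a_{r+1})=c^{-1}ba^{-1}b^{-1}c^{-1}$ and $p_\ast(b_{r+1})=(cba)\,b\,(cba)^{-1}$. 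The resulting words are then simplified using the defining relation $[a_1,b_1]\cdots[a_r,b_r]=c^2$ of $\pi$ to reach the forms stated for $Y(a)$, $Y(b)$ and $Y(c)$.

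The main obstacle is precisely this verification for $Y$: a lengthy bookkeeping exercise in which the dragging of $c$ by three of the twists contributes long conjugating factors that only cancel after applying the surface relation. A secondary point of care is the normalisation chosen in Lemma \ref{ypsilon}: the isotopy class of $\tilde{Y}$ is pinned down only by insisting on an isotopy that fixes both $x_0$ and $j(x_0)$, so different choices would alter $Y$ by an inner automorphism of $\pi$, and one must use exactly that representative to reproduce the specific words written in the proposition.
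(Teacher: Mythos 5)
Your proposal is correct and follows essentially the same route as the paper, which itself offers no more than the remark that the formulas follow from Lemma \ref{pstar} by direct computation with the listed Dehn twists; your additional care about lifting the arc $c$ (using $\tilde{f}j=j\tilde{f}$ and $\tilde{f}(x_0)=x_0$) and about the basepoint-fixing isotopy normalising $\tilde{Y}$ addresses exactly the points the paper leaves implicit. Like the paper, you leave the seven-twist bookkeeping for $Y$ unexecuted, but the stated formulas pass the natural consistency check against the relation $[a_1,b_1]\cdots[a_r,b_r]=c^2$, and your specialisation of Lemma \ref{pstar} to $p_\ast(a_{r+1})$ and $p_\ast(b_{r+1})$ is exactly what that computation requires.
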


All the results of section \ref{explicit} have analogs in the case $g=2r+1$.

\section{Proof of the main theorem}\label{one}
We specialize the situation of section \ref{products} by taking $h=k\geq 2$. The representation 
$\sigma$ of the theorem  will be obtained from 
\begin{eqnarray}
 \rho:=\prod_{\alpha \in \Z/k} \rho^{\alpha,1}:\Gamma(v) \rightarrow \GL(g-1, R)^k \label{S19}
\end{eqnarray}

where $\rho^{\alpha,1}$ was introduced in (\ref{S13}). Our task will be to find sufficiently many elements of $\Gamma(v)$ which have the property
introduced in the following definition. 

\medskip
\textbf{Definition.} Let $\alpha \in \Z/k$. An element $\gamma \in \Gamma(v)$ is called $\alpha$-\emph{concentrated} if $\rho^{\delta,1}(\gamma)=1$
for $\delta \neq \alpha$.

\smallskip
Let $\Gamma^{\alpha}$ be the subgroup of $\Gamma(v)$ which consists of all $\gamma$ which are $\alpha$-concentrated and satisfy
$\det(\rho^{\delta,\beta}(\gamma))=1$ for all $(\delta,\beta )\neq (0,0)$.

\medskip
With these notations, we have the following lemma.

\begin{lemma}\label{elementary}
 \emph{(a)} $\rho^{\alpha,1}(\Gamma^{\alpha})= \rho^{0,1}(\Gamma^0)$ for $\alpha \in \Z/k$.
 
 \smallskip
 \emph{(b)} $\rho^{0,1}(\Gamma(v))$ is a $\Gal\,(\Q(\zeta)/\Q)$-invariant subgroup of $\GL(g-1,R)$ where the Galois group acts on all entries of a matrix.
\end{lemma}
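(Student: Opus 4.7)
My plan is to prove (a) and (b) by constructing explicit automorphisms of $\pi$ whose conjugation shifts the character indices in a controlled way, and to analyze their action on the distinguished bases $\hat D^{\alpha,\beta}$ of the isotypical components.

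For (a), I would introduce $\sigma \in \Aut\pi$ defined by $\sigma(a_1) = a_1 b_1^{-\alpha}$ while fixing the other generators. The identity $[a_1 b_1^{-\alpha}, b_1] = [a_1, b_1]$ shows that $\sigma$ respects the defining relation of $\pi$, and a short computation yields $v \circ \sigma = \phi \circ v$ for the unipotent automorphism $\phi(x, y) = (x, y - \alpha x)$ of $G$ (in additive notation). This implies $\sigma\,\Gamma(v)\,\sigma^{-1} = \Gamma(v)$ and that $\sigma(H^{\delta,1}) = H^{\delta+\alpha,1}$. From the intertwining relation $\sigma(g\cdot q(u)) = \phi(g)\cdot\sigma(q(u))$ one gets $\sigma(D^{\nu,\mu}) = D^{\nu,\, \mu - \alpha\nu}$, and the change of variables $\mu' = \mu - \alpha\nu$ together with the equality $\zeta = \eta$ (valid since $h = k$) gives
\[
\sigma(\hat D^{\delta, 1}) = \sum_{\nu,\mu'} \zeta^{\delta\nu}\eta^{\alpha\nu}\eta^{\mu'} D^{\nu,\mu'} = \hat D^{\delta+\alpha, 1}.
\]
Consequently the matrix of $\sigma|_{H^{\delta,1} \to H^{\delta+\alpha,1}}$ in the distinguished bases is the identity, so $\rho^{\delta+\alpha,1}(\sigma\gamma\sigma^{-1}) = \rho^{\delta,1}(\gamma)$ as matrices in $\GL(g-1,R)$. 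For $\gamma \in \Gamma^0$ this produces an element of $\Gamma^\alpha$ (the concentration and determinant conditions are preserved since $\sigma$ permutes the nontrivial characters bijectively), with $\rho^{\alpha,1}(\sigma\gamma\sigma^{-1}) = \rho^{0,1}(\gamma)$. Replacing $\sigma$ by $\sigma^{-1}$ gives the reverse inclusion, so $\rho^{\alpha, 1}(\Gamma^\alpha) = \rho^{0, 1}(\Gamma^0)$.

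For (b), I would first observe that Galois acts on $U^{\ab} \otimes_\Z \Q(\zeta)$ through the scalars and commutes with the $\Q$-linear $\Gamma(v)$-action on $U^{\ab}$. Since $s\cdot\hat D^{0,1} = \hat D^{0,s}$ for $s \in (\Z/k)^{\times}$, the matrix identity $s\cdot\rho^{0,1}(\gamma) = \rho^{0,s}(\gamma)$ follows directly. Hence Galois-invariance of $\rho^{0,1}(\Gamma(v))$ reduces to showing the containment $\rho^{0,s}(\Gamma(v)) \subseteq \rho^{0,1}(\Gamma(v))$ for every $s$.

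The main obstacle is to produce, for each $\gamma \in \Gamma(v)$ and each $s$, an element $\gamma' \in \Gamma(v)$ with $\rho^{0,1}(\gamma') = \rho^{0,s}(\gamma)$. I would try to run an argument parallel to (a) by seeking an automorphism $\tau_s \in \Aut\pi$ (not necessarily in $\Gamma(v)$) with $v \circ \tau_s = \psi_s \circ v$ for a lift $\psi_s \in \Aut G$ of the Galois scaling, e.g.\ $\psi_s(x, y) = (x, sy)$. Conjugation by $\tau_s$ would then send $H^{0,s}$ onto $H^{0,1}$ and yield the required $\gamma' = \tau_s\gamma\tau_s^{-1}$, together with a matrix identity of the same type as in (a). Unlike the unipotent $\phi$ of part (a), however, the Galois automorphism $\psi_s$ does not lift to $\Aut\pi$ by modifying a single generator, because the determinant of the induced map on $\pi^{\ab}$ would be divisible by $s$ rather than $\pm 1$; a more elaborate construction using several generators and the explicit description of $\Aut\pi$ from Proposition \ref{autom} (combined with the Dehn--Nielsen theorem) is needed. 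Alternatively, one can verify Galois-invariance by direct computation on a finite generating set of $\Gamma(v)$, exploiting that most of the Dehn-twist generators act on $H^{0,1}$ through rational-integer matrices and are therefore trivially Galois-invariant, reducing the task to a small number of explicit checks.
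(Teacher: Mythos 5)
Your argument for part (a) is in substance the paper's own: your $\sigma$, defined by $\sigma(a_1) = a_1 b_1^{-\alpha}$, is exactly $S_1^{\alpha}$ in the notation of Proposition~\ref{autom}, the check $v\circ\sigma = \phi\circ v$ is how the paper verifies that $S_1$ normalizes $\Gamma(v)$, and your formula $\sigma(\hat{D}^{\delta,1}) = \hat{D}^{\delta+\alpha,1}$ is (\ref{S22}) iterated $\alpha$ times. Part (a) is fine.

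Part (b) has a genuine gap. You correctly reduce Galois-invariance to the containment $\rho^{0,s}(\Gamma(v)) \subseteq \rho^{0,1}(\Gamma(v))$ via (\ref{S25}), and you correctly diagnose why the naive imitation of (a) breaks down: the Galois scaling $\psi_s(x,y)=(x,sy)$ of $G$ is not unipotent, and the determinant on $\pi^{\ab}$ obstructs lifting it to $\Aut\,\pi$. But neither of your two fallbacks is a proof. The ``more elaborate construction'' of a lift $\tau_s$ is not exhibited (and there is no reason it should exist: $\Aut\,\pi$ surjects onto a finite-index subgroup of $\GL(g,\Z)$, whereas you would need an automorphism of $\pi^{\ab}$ with determinant divisible by $s$). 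The ``direct computation on a finite generating set of $\Gamma(v)$'' is not available either — no such generating set is in hand, and producing enough elements of $\Gamma(v)$ is essentially the content of the rest of the paper. The idea you are missing is to sidestep the lifting problem entirely by also using the second unipotent automorphism $R_1$ (with $R_1(b_1)=b_1 a_1$, so $v\circ R_1 = (v_1 v_2, v_2)$), which likewise normalizes $\Gamma(v)$ and gives the companion rule $\rho^{\alpha,\beta}(R_1^{-1}\gamma R_1)=\rho^{\alpha,\beta-\alpha}(\gamma)$, dual to $\rho^{\alpha,\beta}(S_1^{-1}\gamma S_1)=\rho^{\alpha+\beta,\beta}(\gamma)$. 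These two conjugation rules act on the label pair $(\alpha,\beta)\in(\Z/k)^2$ by the two elementary transvections generating $\SL(2,\Z/k)$; since $(0,\ell)$ with $\gcd(\ell,k)=1$ and $(0,1)$ are both unimodular, they lie in one orbit, so there is $w\in\langle R_1,S_1\rangle$ with $\rho^{0,\ell}(w^{-1}\gamma w)=\rho^{0,1}(\gamma)$. Combining with (\ref{S25}) then gives $\psi(\rho^{0,1}(w^{-1}\gamma w))=\rho^{0,1}(\gamma)$, which is the invariance you need. In short: no lift of $\psi_s$ to $\Aut\,\pi$ is required; the pair of unipotent conjugations already moves $(0,\ell)$ back to $(0,1)$.
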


\begin{proof}
 We consider the automorphisms $S_1$ and $R_1$ of $\pi$  introduced in section \ref{explicit}. They have the following two properties:
\begin{eqnarray}
 S_1(U)=U =R_1(U)\;.\quad  \label{S20}\\[1ex]
 S_1 \;\text{and}\; R_1 \;\text{normalize}\; \Gamma(v)\;. \label{S21}
\end{eqnarray}

Property (\ref{S20}) follows immediately from Proposition \ref{autom}. For (\ref{S21}), let $v_1,v_2: \pi \rightarrow C_k$ be the two
components of $v$ and observe that 
\[v \circ S_1= (v_1,v_2 \cdot v_1^{-1})\; , \; v\circ R_1 = (v_1 \cdot v_2,v_2)\; ;\]

then (\ref{S21}) follows easily from (\ref{S1}).

\smallskip
Because of (\ref{S20}), $S_1$ and $R_1$ induce  automorphisms $\bar{S}_1$ and $\bar{R}_1$ of $H_1(\tilde{N})$ $= U^{\ab}\otimes \Q(\zeta)$. With the notation from
(\ref{S8}) and (\ref{S11}), we have 
\begin{eqnarray}
\bar{S}_1(D^{\nu,\mu})= D^{\nu,\mu - \nu}\;,\nonumber \\[1ex]
 \bar{S}_1(\hat{D}^{\alpha,\beta})= \hat{D}^{\alpha+\beta,\beta}\, .\label{S22}
\end{eqnarray}

From (\ref{S21}), we see that, for $\gamma \in \Gamma(v)$, we can form $\rho^{\alpha,\beta}(S_1^{-1}\gamma S_1)$; formula (\ref{S22})
then gives the matrix equation 
\begin{eqnarray}
 \rho^{\alpha,\beta}(S_1^{-1}\gamma S_1) = \rho^{\alpha+\beta,\beta}(\gamma) \label{S23}
\end{eqnarray}

which holds for $(\alpha,\beta)\neq (0,0)$. This shows (a). 

\medskip
Proceeding in an analogous manner for $R_1$, we have 
\begin{eqnarray}
 \rho^{\alpha,\beta}(R_1^{-1}\gamma R_1) = \rho^{\alpha,\beta -\alpha}(\gamma) \label{S24}
\end{eqnarray}
Let $\psi \in \Gal\,(\Q(\zeta)/\Q)$ with $\psi(\zeta)= \zeta^\ell$. Then
\begin{eqnarray}
 \psi(\rho^{\alpha,\beta}(\gamma)) = \rho^{\ell\alpha,\ell\beta}(\gamma)\,.\label{S25}
\end{eqnarray}
By (\ref{S23}) and (\ref{S24}), we can find an element $w \in <R_1,S_1>$ such that $\rho^{0,\ell}(w^{-1}\gamma w)$ $=\rho^{0,1}(\gamma)$,
and by (\ref{S25}), we have $\psi(\rho^{0,1}(w^{-1}\gamma w)) = \rho^{0,1}(\gamma)$. This shows (b).
\end{proof}

\medskip
Our main theorem as stated in the introduction will follow easily from 
\begin{lemma}\label{key}
 The index of $\rho^{0,1}(\Gamma^0)$ in $\SL(g-1,R)$ is finite.
\end{lemma}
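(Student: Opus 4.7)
The plan is to reduce the finite-index statement to producing elementary transvections in $\rho^{0,1}(\Gamma^0)\subseteq\SL(g-1,R)$, and then to invoke a standard arithmeticity result. Since $g-1\geq 3$ and $R$ is the ring of integers in the cyclotomic field $\Q(\zeta)$, the Bass--Milnor--Serre theorem (in Vaserstein's generalization to rings of algebraic integers) guarantees that any subgroup of $\SL(g-1,R)$ containing an elementary congruence subgroup $E(g-1,I)$ for some nonzero ideal $I\subseteq R$ has finite index. Thus it suffices to exhibit, inside $\rho^{0,1}(\Gamma^0)$, a transvection $E_{ij}(x)$ for every ordered pair $(i,j)$ of basis indices from Proposition~\ref{basis}, with $x$ filling out a nonzero ideal of $R$.

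First, I would use the formulas of Proposition~\ref{autom} to compute, in the Fourier basis (\ref{S11}), the matrix of each generator of $\Aut\,\pi$ that lies in $\Gamma(v)$. The elements $R_i, S_i, T_i$ for $2\leq i\leq r$ and $Y$ all belong to $\Gamma(v)$ because they fix $a_1$ and $b_1$; conjugation by the prefix $u=a_1^\nu b_1^\mu$ then commutes cleanly with their action on the remaining generators, so each of these gives the \emph{same} transvection on every isotypic piece $H^{\alpha,\beta}$. On $H^{0,1}$ one already sees the standard transvections $\hat B_i\mapsto\hat B_i+\hat A_i$ and $\hat A_i\mapsto\hat A_i-\hat B_i$ with $\Z$-entries; but since the same matrix also appears on $H^{\delta,1}$ for $\delta\neq 0$, none of these uniform generators is $\alpha$-concentrated on its own. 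The non-uniform ingredients come from the $k$-th powers $R_1^k, S_1^k, T_1^k$, which lie in $\Gamma(v)$ thanks to $\zeta^k=\eta^k=1$, together with $G$-equivariant lifts of Dehn twists along curves in $N$ whose Fourier content on $\tilde N$ is supported at $(\alpha,\beta)=(0,1)$.

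The crucial step is to manufacture $0$-concentrated transvections from these ingredients. I would form commutators and products of the shape $\prod_a R_1^{-a}[\gamma_0,\gamma_1]R_1^a$ with $\gamma_0$ uniform and $\gamma_1$ non-uniform, using the normalization relations (\ref{S23}) and (\ref{S24}) to control how the indices $(\alpha,\beta)$ shift under conjugation by $R_1$ and $S_1$. An appropriate choice of commutator combined with a $C_k$-averaging kills the $\rho^{\delta,1}$-image for $\delta\neq 0$ while leaving a nontrivial transvection on $H^{0,1}$ with coefficient in $R$; the determinant condition cutting $\Gamma^0$ out of the $0$-concentrated subgroup of $\Gamma(v)$ is automatic because commutators have determinant one on every piece. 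By Lemma~\ref{elementary}(b), the set of coefficients produced is stable under $\Gal(\Q(\zeta)/\Q)$, so the ideal $I\subseteq R$ it generates is Galois-invariant, hence of finite index, and iterating across all basis index pairs gives the inclusion $\rho^{0,1}(\Gamma^0)\supseteq E(g-1,I)$.

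The main obstacle is this last construction: explicitly identifying, for each basis pair $(i,j)$, an element of $\Gamma(v)$ whose $\rho$-action is concentrated on the $(0,1)$-isotypic component and realizes the required transvection. The ``uniform'' and ``obvious non-uniform'' generators are both individually insufficient -- the former act identically on every $H^{\alpha,1}$, while the simplest of the latter are trivial on every $H^{\alpha,\beta}$ with $(\alpha,\beta)\neq (0,0)$ -- so the commutator and averaging construction requires careful Fox-calculus bookkeeping, controlled by the relations (\ref{S23})--(\ref{S25}) and by the explicit geometric description of $\tilde N$ from subsection~\ref{products}.
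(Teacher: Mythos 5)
Your overall strategy---fill $\rho^{0,1}(\Gamma^0)$ with elementary matrices $E_{ij}(x)$, with $x$ running through a nonzero Galois-stable ideal of $R$, then quote Vaserstein---is the same as the paper's. But the step on which everything hinges, namely producing even one nontrivial $0$-concentrated element, is not carried out, and the mechanism you propose for it would fail. By (\ref{S23}) and (\ref{S24}), conjugation by $S_1$ (respectively $R_1$) merely relabels the components: on the slice $\beta=1$, conjugation by $S_1$ permutes the $\rho^{\delta,1}$ cyclically. Hence for a product over a full orbit of conjugates, the image under $\rho^{\delta,1}$ is, for every $\delta$, a product of the same family of matrices in a cyclically shifted order; group multiplication does not average linearly, so this ``$C_k$-averaging'' makes an element \emph{more} uniform across the components, never concentrated. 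Likewise a commutator $[\gamma_0,\gamma_1]$ with $\gamma_0$ ``uniform'' is trivial on $\rho^{\delta,1}$ for $\delta\neq 0$ only if the two images happen to commute on those components, which you cannot know without computing $\rho^{\delta,1}(\gamma_1)$ explicitly---exactly the computation you defer as ``the main obstacle.''

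In the paper, concentration enters at precisely one point, and it is a computational fact rather than a formal one: $V=T_1^k$ lies in $\Gamma^0$ with $\varrho(V)=E_{1,r}(k)\,E_{g-1,r}(z)$, $z\neq 0$ (Lemma \ref{v}); note that this already contradicts your assertion that the simplest non-uniform elements are trivial on every $H^{\alpha,\beta}$ with $(\alpha,\beta)\neq(0,0)$. The only formal ingredient is Lemma \ref{comm}: commutators of arbitrary elements of $\Gamma(v)$ with elements of $\Gamma^0$ stay in $\Gamma^0$, the determinant condition being automatic for commutators---the one point where your reasoning agrees with the paper. The remaining substantial work consists of Lemma \ref{w} (the matrix of $W$, which supplies the $\zeta$-entries via $\varrho([R_2,W^{-1}])=E_{s,s+1}(2\bar\zeta)$ and Galois invariance, Lemma \ref{elementary}(b)) and Lemma \ref{steinberg} (elementary matrices $E_{ij}(m_{ij})$ in $\varrho(\Gamma(v))$ for all $i\neq j$, obtained from the explicit formulas (\ref{S30})--(\ref{S35}) and Steinberg-relation bookkeeping); commuting these against $V$ and against an element $X$ with $\varrho(X)=E_{s,s+1}(2\zeta)$ then puts $E_{ij}(n\zeta^{\ell})$ into $\varrho(\Gamma^0)$ for all $i\neq j$ and all $\ell$, and only then does Vaserstein apply. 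Since none of these computations (nor any workable substitute for the concentration of $V$) appears in your proposal, the argument has a genuine gap at its core.
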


Let us show how Lemma \ref{key} implies the theorem: The representation $\rho$ of (\ref{S19}) defines, by the process described at the end of
section \ref{products}, a homomorphism
\begin{eqnarray}
\sigma:=\prod_{\alpha \in \Z/k} \sigma^{\alpha,1}:\Gamma \rightarrow \SL(g-1, R)^k\;. \label{S26}
\end{eqnarray}

We show that $\sigma$ has an image of finite index. 
Let $\Gamma'$ be the subgroup of $\Gamma(v)$ generated by all the $\Gamma^{\alpha}$ for $\alpha \in \Z/k$. Then, by (\ref{S19}), by Lemma \ref{elementary} (a) 
and Lemma \ref{key}, we see that $\rho(\Gamma')$ is of finite index in $\SL(g-1, R)^k$. We don't assert that $\Gamma'$ is of finite index in
$\Aut\, \pi$. But $\Gamma'$ is obviously contained in the group $\tilde{\Gamma}$ of (\ref{S14}). Hence the group $\sigma(\Gamma)$ is of
finite index. \hfill $\Box$

\medskip
In order to prove Lemma \ref{key}, we shall show that $\rho^{0,1}(\Gamma^0)$ contains sufficiently many elementary matrices. We adopt the usual
notation: For $i\neq j$ and $z \in R$, let $E_{i,j}(z)$ be the matrix which differs from the unit matrix only in the position $(i,j)$ where we 
have $z$ instead of $0$. Instead of $E_{i,j}(1)$, we write simply $E_{i,j}$. The elementary matrices satisfy some well-known commutator relations,
the so-called Steinberg relations (see \cite{GL}, (42)), which we will use quite often. To simplify some formulas, it is convenient to replace
the representation $\rho^{0,1}$ by 
\begin{eqnarray}
 \varrho: \Gamma (v)\rightarrow \GL(g-1,R)\, , \quad \gamma \mapsto E^{-1}_{g-1,r-1} \cdot \rho^{0,1}(\gamma) \cdot E_{g-1,r-1}\; . \label{S27}
\end{eqnarray}

It is enough to show that the index of $\varrho(\Gamma^0)$ in $\SL(g-1,R)$ is finite.

\smallskip
Recall the elements of $\Aut\, \pi$ described in Proposition \ref{autom}. All of them with the exception of $R_1,S_1,$ and $T_1$ lie in the 
subgroup $\Gamma(v)$. There are two further interesting automorphisms of $\pi$:
\begin{eqnarray}
 V&:= &T_1^k \; ,\label{S28} \\[1ex]
 W&:=&(S_1 T_1 \ldots S_{r-1} T_{r-1})\cdot Y \cdot (S_1 T_1 \ldots S_{r-1} T_{r-1})^{-1} \, . \label{S29}
\end{eqnarray}

By straightforward but onerous computations, one can show: 
\begin{lemma}\label{v}
 We have $V \in \Gamma^0$ and 
 \[\varrho(V)= E_{1,r}(k) \, E_{g-1,r}(z)\]
 with $z = 2k(1-\zeta)^{-1} = -2 (1+2\zeta + \ldots + k\zeta^{k-1})$. \hfill $\Box$
\end{lemma}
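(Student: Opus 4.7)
\emph{Proof plan.} The plan has three stages: (i) verify $V = T_1^{k}\in\Gamma(v)$ and compute how $V$ acts on the generators of $\pi$; (ii) show that the induced automorphism $\bar V$ of $U^{\ab}\otimes\Q(\zeta,\eta)$ fixes every basis vector $\hat D^{\alpha,\beta}$ except $\hat B_2^{\alpha,\beta}$, which reduces the question to computing $q(\omega)$ with $\omega := w_1^{k}\in U$; (iii) compute $q(\omega)$, project to $H^{0,1}$, and read off the matrix.

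For stage (i), a direct substitution using the formulas of Proposition~\ref{autom} shows $T_1(w_1) = w_1$. Iterating gives $V(b_1) = \omega^{-1}b_1$, $V(a_2) = \omega^{-1}a_2\omega$, $V(b_2) = b_2\omega$, while $V$ fixes the other generators. Because $v(w_1) = (\zeta^{-1},1)$ has order $k$ in $G$, we have $\omega\in U$ and $V\in\Gamma(v)$. For stage (ii) set $\tilde\omega_i := b_1^{i}\omega b_1^{-i}\in U$ (so $q(\tilde\omega_i) = \sigma_2^{i}q(\omega)$) and expand $(\omega^{-1}b_1)^\mu = \tilde\omega_0^{-1}\cdots\tilde\omega_{\mu-1}^{-1}b_1^\mu$. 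Substituting into $V(a_1^\nu b_1^\mu d b_1^{-\mu}a_1^{-\nu})$, the $\tilde\omega_i^{\pm 1}$'s cancel pairwise in the abelianization: for $d$ fixed by $V$, and for $d=a_2$ (where $V(a_2) = \omega^{-1}a_2\omega$ contributes an extra matched pair), the cancellation is complete and $\bar V(D^{\nu,\mu}) = D^{\nu,\mu}$; only for $d=b_2$ does one unpaired $\tilde\omega_\mu$ survive, giving $\bar V(B_2^{\nu,\mu}) = B_2^{\nu,\mu} + \sigma_1^\nu\sigma_2^\mu q(\omega)$. Summing with character weights, $\bar V(\hat B_2^{\alpha,\beta}) = \hat B_2^{\alpha,\beta} + k^{2}q(\omega)^{\alpha,\beta}$ and every other $\hat D^{\alpha,\beta}$ is fixed; in particular each $\rho^{\alpha,\beta}(V)$ deviates from $I$ in at most one column, so $\det\rho^{\alpha,\beta}(V) = 1$.

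For stage (iii), factor $w_1 = s\cdot t$ with $s := [b_1,a_1^{-1}] = a_1^{-1}[a_1,b_1]a_1\in U$ and $t := a_1^{-1}a_2$, so $v(t)=\sigma_1^{-1}$. The telescoping identity $(st)^k = s(tst^{-1})(t^{2}st^{-2})\cdots(t^{k-1}st^{-(k-1)})t^{k}$ yields
\[ q(\omega) \;=\; \sum_{i=0}^{k-1}\sigma_1^{-i}\,q(s) \;+\; q(t^{k}). \]
To compute $q(s)$ I would use the defining relation of Lemma~\ref{pstar}, $[a_1,b_1] = c^{2}\prod_{i=r}^{2}[a_i,b_i]^{-1}$: since each $[a_i,b_i]$ with $i\geq 2$ lies in $[U,U]$, the commutators vanish in $U^{\ab}$ and $q(s) = 2C^{-1,0}$. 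An analogous telescoping for $(a_1^{-1}a_2)^{k}$ gives $q(t^{k}) = \sum_m A_2^{m,0} - A$. Every summand of the resulting $q(\omega)$ is $\sigma_1$-invariant, so $q(\omega)\in\bigoplus_\beta H^{0,\beta}$; in particular $q(\omega)^{\delta,1} = 0$ for $\delta\neq 0$, which combined with stage (ii) gives $\rho^{\delta,1}(V) = I$ for $\delta\neq 0$. Together with the determinant observation above, this shows $V\in\Gamma^0$.

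It remains to extract $q(\omega)^{0,1}$. The routine pieces give $P_{0,1}(\sum_\nu C^{\nu,0}) = k^{-1}\hat C^{0,1}$ and $P_{0,1}(\sum_m A_2^{m,0}) = k^{-1}\hat A_2^{0,1}$; the subtle contribution is $P_{0,1}(A)$, which is nonzero because $A$ is only $\sigma_1$-invariant. Applying the same telescoping device to $b_1 a_1^{k}b_1^{-1} = ([b_1,a_1]\,a_1)^{k}$ and invoking the defining relation again yields $\sigma_2^{\mu}A = A - 2\sum_{\nu,\,m<\mu}C^{\nu,m}$. Summing against weights $\eta^\mu$, swapping the order of summation, and using $\sum_\mu\eta^\mu = 0$, $\sum_{\nu,\mu}C^{\nu,\mu}=0$ together with $\sum_{\nu,\mu}\eta^\mu C^{\nu,\mu} = \hat C^{0,1}$, one extracts
\[ P_{0,1}(A) \;=\; \frac{-2\eta}{k(1-\eta)}\,\hat C^{0,1}. \]
Combining (and using $\eta=\zeta$ since $h=k$), $q(\omega)^{0,1} = k^{-1}\hat A_2^{0,1} + 2\bigl(k(1-\zeta)\bigr)^{-1}\hat C^{0,1}$, and therefore $\bar V(\hat B_2^{0,1}) = \hat B_2^{0,1} + k\,\hat A_2^{0,1} + z\,\hat C^{0,1}$ with $z = 2k/(1-\zeta)$. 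In the basis ordering that places $\hat A_2^{0,1}$ at position $1$, $\hat B_2^{0,1}$ at position $r$, and $\hat C^{0,1}$ at position $g-1$, the matrix of $\bar V$ reads $I + k\,e_{1,r} + z\,e_{g-1,r} = E_{1,r}(k)E_{g-1,r}(z)$; the conjugation by $E_{g-1,r-1}$ defining $\varrho$ leaves this expression unchanged because the relevant matrix-unit products vanish by index mismatch (for $g=2r$ and $r\geq 3$). The main obstacle is precisely this last projection: the characteristic factor $(1-\zeta)^{-1}$ in $z$ emerges only from the root-of-unity geometric sum for $\sigma_2^{\mu}A$ after the single relation $\sum_{\nu,\mu}C^{\nu,\mu} = 0$ is used to cancel the would-be dominant, $H^{0,0}$-valued contribution.
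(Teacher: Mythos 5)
Your overall route is sound, and it is essentially the computation the paper suppresses behind ``straightforward but onerous'': the reduction to $q(\omega)$ with $\omega=w_1^k$ via $T_1(w_1)=w_1$, the telescoping identities, $q(s)=2C^{-1,0}$ from the surface relation, $q(t^k)=\sum_m A_2^{m,0}-A$, the $\sigma_1$-invariance of $q(\omega)$ (giving $0$-concentration), and the projection $P_{0,1}(A)=\tfrac{-2\eta}{k(1-\eta)}\hat C^{0,1}$ all check out, and they combine to $\bar V(\hat B_2^{0,1})=\hat B_2^{0,1}+k\,\hat A_2^{0,1}+z\,\hat C^{0,1}$ with exactly the paper's $z=2k(1-\zeta)^{-1}$. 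Your implicit basis ordering ($\hat A_2,\dots,\hat A_r$, then $\hat B_2,\dots,\hat B_r$, then $\hat C$) is also the right one: it is the unique ordering consistent with (\ref{S30})--(\ref{S35}) and with Lemma \ref{w} after the conjugation (\ref{S27}).

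There are two gaps, one of substance. First, your last step disposes of the conjugation by $E_{g-1,r-1}$ only ``for $r\geq 3$'', but the lemma (and the main theorem, which allows $g=4$) includes $r=2$. For $r=2$ the index mismatch fails: with $E=E_{3,1}$ one has $e_{3,1}e_{1,2}=e_{3,2}\neq 0$, and a direct computation gives $E^{-1}(I+k e_{1,2}+z e_{3,2})E=I+k e_{1,2}+(z-k)e_{3,2}$, i.e. $E_{1,2}(k)\,E_{3,2}(z-k)$ rather than the stated $E_{1,2}(k)\,E_{3,2}(z)$. So you must either treat $r=2$ separately or conclude that, in your (paper-consistent) conventions, the stated formula needs the correction $z\mapsto z-k$ there; as written your argument does not prove the lemma in the generality in which it is stated, and you should flag this discrepancy explicitly. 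Second, a minor point: ``deviates from $I$ in at most one column, so $\det=1$'' is not by itself a valid inference --- the determinant of such a matrix equals the diagonal entry of the deviating column. It is $1$ here only because $q(\omega)$ (hence each $P_{0,\beta}(q(\omega))$, $\beta\neq 0$) has no $\hat B_2$-component, which does follow from your formula $q(\omega)=2\sum_\nu C^{\nu,0}+\sum_m A_2^{m,0}-A$ together with the fact that the expansion of $A$ involves only $\hat C$-terms; say this.
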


\begin{lemma}\label{w}
We have $W \in \Gamma(v)$ and 

\parbox{11.8cm}{
\begin{eqnarray*}\varrho(W)=\left[\arraycolsep2.8pt\begin{array}{cccc|cccc|c}
1\,&      &&&&&&&\\
 &\ddots\, &&& & &&&\\
 & &\;1 &&&& &&\\
 -2\,&\cdots &-2\,&-\zeta&&&&1+\zeta&1-\zeta\\ \hline 
 \rule[-2mm]{0mm}{6mm}&&&&1&&&&\\
 &&&&&\quad\ddots&&&\\
 &&&&&&\quad\ddots &&\\
 &&&&&&&1&\\ \hline
 &&&&&& &&1
\end{array} \right]\;.\end{eqnarray*}}\hfill
\parbox{6mm}{$\Box$} 
\end{lemma}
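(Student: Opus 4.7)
The plan is twofold: first verify $W \in \Gamma(v)$, then compute $\varrho(W)$ on the basis $\{\hat{A}_2, \ldots, \hat{A}_r, \hat{B}_2, \ldots, \hat{B}_r, \hat{C}\}$ of $H^{0,1}$ provided by Proposition \ref{basis}.

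Set $g := S_1 T_1 \cdots S_{r-1} T_{r-1}$, so $W = g Y g^{-1}$. Conjugation by $g$ carries $\Gamma(v)$ to $\Gamma(v \circ g)$, so $W \in \Gamma(v)$ if and only if $Y \in \Gamma(v')$, where $v' := v \circ g$. Since Proposition \ref{autom} shows $Y$ fixes $a_i, b_i$ for $i < r$, it suffices to check $v'(Y(x)) = v'(x)$ for $x \in \{a, b, c\}$. Expanding $v'(a), v'(b), v'(c)$ by tracking $g$ through the formulas of Proposition \ref{autom}, and using the abelianness of $G$ together with the identity $v(c)^2 = v([a_1, b_1] \cdots [a_r, b_r]) = 1$, the three required identities reduce to routine relations in $G = C_k \times C_k$.

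For the matrix computation, one evaluates $\rho^{0,1}(W)$ column by column: on each basis vector $\hat{D}^{0,1} = \sum_{\nu, \mu} \zeta^\mu D^{\nu,\mu}$, apply $W$ to the representatives $a_1^\nu b_1^\mu d b_1^{-\mu} a_1^{-\nu}$, abelianize in $U^{\ab}$, and reweight. Because $Y$ is supported on $\{a, b, c\}$, its direct contribution in $U^{\ab}$ lies in a $3 \times 3$ block acting on $\hat{A}_r, \hat{B}_r, \hat{C}$; expanding $Y(a), Y(b), Y(c)$ via Proposition \ref{autom} and simplifying modulo relation (\ref{S10}), $\sum C^{\nu,\mu} = 0$, produces the entries $-\zeta$, $1 + \zeta$, $1 - \zeta$, the $\zeta$-factors emerging from the $b_1^\mu$-conjugation in (\ref{S8}) combined with the $\mu$-weighted summation. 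The outer conjugation by $g$ then propagates the $\hat{A}_r$-contribution across the $\hat{A}_i$-columns: each $T_i$ introduces $w_i = b_i a_i^{-1} b_i^{-1} a_{i+1}$, which abelianizes to $a_{i+1} - a_i$, and telescoping over $i = 1, \ldots, r-1$ produces the constant entries $-2$ in the $\hat{A}_r$-row of the $\hat{A}$-block. The $\hat{B}_i$-columns and the remaining rows are untouched at the isotypical level. Finally, the conjugation by $E_{g-1, r-1}$ in (\ref{S27}) absorbs a spurious offset to give the clean entry $1 - \zeta$ in the $(r-1, 2r-1)$-position.

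The main obstacle is the word-length explosion under the outer conjugation: each pair $S_i T_i$ modifies several generators, and composing $r-1$ of them without care produces intractable expressions. The efficient route is to stay inside $U^{\ab} \otimes \Q(\zeta)$ throughout and exploit that any contribution weighted by $\zeta^{\alpha \nu}$ with $\alpha \neq 0$ vanishes in $H^{0,1}$ under the $\nu$-summation; this reduction leaves only the $\mu$-dependent data coming from the explicit $Y$-formulas of Proposition \ref{autom}, making the computation tractable and isolating the entries displayed in the matrix.
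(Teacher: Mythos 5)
Your proposal is a plan for a computation, not the computation itself, and the lemma has no content beyond the explicit matrix: every entry ($-2$, $-\zeta$, $1+\zeta$, $1-\zeta$) is asserted to ``emerge'' or be ``produced'' without a single one being derived, and likewise the membership $W\in\Gamma(v)$ is only ``reduced to routine relations in $G$'' that are never checked. The paper itself offers no proof (it labels both Lemma \ref{v} and Lemma \ref{w} as following from ``straightforward but onerous computations''), so a blind proof of this statement has to actually carry out and record the calculation; restating that a calculation would yield the displayed entries is a genuine gap, not a proof.

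Moreover, the structural picture you describe would mislead anyone who tried to execute it. You attribute the entries $-\zeta$, $1+\zeta$, $1-\zeta$ to the ``direct contribution'' of $Y$ in a $3\times 3$ block: but $Y$ fixes $a_1$ and $b_1$, so applying $Y$ to the representatives $a_1^{\nu}b_1^{\mu}db_1^{-\mu}a_1^{-\nu}$ never shifts $(\nu,\mu)$ and can only produce integer entries --- indeed (\ref{S35}) records $\varrho(Y)=D_s$. The cyclotomic entries necessarily come from the conjugating word $S_1T_1\cdots S_{r-1}T_{r-1}$, and exactly there your reduction ``stay at the isotypical level'' breaks down: $\bar S_1$ does not preserve $H^{0,1}$ but shifts it to $H^{1,1}$ by (\ref{S22}), and $T_1$ does not even preserve $U$ (e.g.\ $b_2\in U$ but $v(T_1(b_2))=v(w_1)=(\zeta^{-1},1)\neq 1$), so there is no matrix of the conjugating word on $H^{0,1}$ to ``propagate'' anything with, and your telescoping over $w_i$ silently includes the problematic factor $w_1\notin U$. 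A quick sanity check shows the conjugation picture cannot be right as stated: the displayed matrix has determinant $-\zeta$, whereas any conjugate of $\varrho(Y)=D_s$ inside $\GL(g-1,R)$ has determinant $-1$. The correct procedure is the one you gesture at in your first sentence about the matrix --- apply the full composite $W$ to the representatives, expand in $U^{\ab}\otimes\Q(\zeta)$ keeping all $(\nu,\mu)$-shifts created by $S_1$ and $T_1$, use (\ref{S10}), and only then project to $H^{0,1}$ and conjugate by $E_{g-1,r-1}$ --- but that work, which is the entire substance of the lemma, is missing.
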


In contrast to the two previous lemmas, the next lemma is completely obvious. It allows to produce further elements of $\Gamma^0$:

\begin{lemma}\label{comm}
 For $\gamma \in \Gamma(v)$ and $\gamma_0 \in \Gamma^0$, we have $[\gamma,\gamma_0 ] \in \Gamma^0$. \hfill $\Box$
\end{lemma}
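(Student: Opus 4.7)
The plan is simply to unpack the definition of $\Gamma^0$ and verify the two defining properties for the commutator $[\gamma,\gamma_0] = \gamma\gamma_0\gamma^{-1}\gamma_0^{-1}$. Recall that $\gamma_0 \in \Gamma^0$ means $\gamma_0 \in \Gamma(v)$, $\rho^{\delta,1}(\gamma_0) = 1$ for every $\delta \neq 0$ in $\Z/k$, and $\det(\rho^{\delta,\beta}(\gamma_0)) = 1$ for every $(\delta,\beta)\neq (0,0)$. Since $\Gamma(v)$ is a subgroup of $\Aut\,\pi$, the commutator $[\gamma,\gamma_0]$ automatically lies in $\Gamma(v)$, so only the two numerical conditions require checking.

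For the $0$-concentration property, I would simply apply the homomorphism $\rho^{\delta,1}$ (for $\delta \neq 0$) to the commutator. Using that $\rho^{\delta,1}(\gamma_0) = 1$ by hypothesis, the four factors collapse to the identity matrix:
\[\rho^{\delta,1}([\gamma,\gamma_0]) = \rho^{\delta,1}(\gamma)\cdot 1 \cdot \rho^{\delta,1}(\gamma)^{-1}\cdot 1 = 1.\]
Thus $[\gamma,\gamma_0]$ is $0$-concentrated.

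For the determinant condition, I would invoke the general fact that the determinant of any group commutator in $\GL(g-1,R)$ is $1$, since $\det$ is a homomorphism into the abelian group $R^\times$. Applying this to $\rho^{\delta,\beta}([\gamma,\gamma_0])$ for each $(\delta,\beta)\neq (0,0)$ gives
\[\det\bigl(\rho^{\delta,\beta}([\gamma,\gamma_0])\bigr) = 1,\]
as required. There is no real obstacle here; the lemma is, as the authors state, a direct consequence of the definition of $\Gamma^0$ and the fact that commutators have trivial determinant and trivial image under any homomorphism that kills one of the two factors.
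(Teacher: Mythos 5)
Your proof is correct and is exactly the direct unpacking of the definition of $\Gamma^0$ that the paper clearly has in mind when it labels the lemma ``completely obvious'' and offers no written argument; the two observations you make (that a homomorphism which kills one factor kills the commutator, and that determinants of commutators in $\GL$ are trivial because $R^\times$ is abelian) are precisely the content.
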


We will deduce Lemma \ref{key} from the following lemma.

\begin{lemma}\label{steinberg}
 For all $i,j \in \{1, \ldots , g-1\}$ with $i\neq j$ there exist $\gamma_{ij} \in \Gamma(v)$ and an integer $m_{ij}\neq 0$ such that
 $\varrho(\gamma_{ij})= E_{ij}(m_{ij})$.
\end{lemma}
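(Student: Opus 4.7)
The plan is to bootstrap from the two explicit ``seed'' elements $V \in \Gamma^0$ (Lemma \ref{v}) and $W \in \Gamma(v)$ (Lemma \ref{w}), together with conjugations by suitable elements of $\Gamma(v)$ drawn from Proposition \ref{autom}, to fill every off-diagonal position with an elementary matrix having a nonzero integer entry.

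The first step is to isolate a single elementary matrix with integer entry. The image $\varrho(V) = E_{1,r}(k)\, E_{g-1,r}(z)$ is a product of two commuting elementary matrices (their first indices differ and their second indices coincide), but only the first factor has an integer multiplier, while $z \in R \setminus \Z$. I would look for an automorphism $\tau \in \Gamma(v)$ whose $\varrho$-image acts trivially on the first basis line and reverses the sign of the $(g-1)$-th basis line of $H^{0,1}$; conjugating $V$ by such a $\tau$ inverts only the second factor, so $\varrho(V \cdot \tau V \tau^{-1}) = E_{1,r}(2k)$, a clean elementary matrix with nonzero integer entry. Candidates for $\tau$ come from the generators $R_i, S_i, T_i$ with $i \geq 2$ and $Y$, all of which can be verified to lie in $\Gamma(v)$ by a direct check using Proposition \ref{autom} together with the fact that $v$ is trivial on $a_i, b_i, c$ for $i \geq 2$ (only $R_1, S_1, T_1$ fail to preserve $v$).

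Once a single $\gamma_\ast \in \Gamma(v)$ with $\varrho(\gamma_\ast) = E_{1,r}(m)$ is in hand, I would conjugate $\gamma_\ast$ by further elements of $\Gamma(v)$ whose action on $H^{0,1}$ moves the basis vectors $\hat{A}_2^{0,1}, \hat{B}_2^{0,1}, \ldots, \hat{A}_r^{0,1}, \hat{B}_r^{0,1}, \hat{C}^{0,1}$ around. Such a conjugation replaces $E_{1,r}(m)$ by $E_{i',j'}(m)$ for the image positions; then Steinberg commutators $[E_{i,j}(a), E_{j,k}(b)] = E_{i,k}(ab)$ with previously constructed elementary matrices or with $\varrho(W)$ (whose nontrivial row $r$ connects column $r$ to every other column) generate elementary matrices in further positions. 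Iterating produces $E_{i,j}(m_{ij})$ for all $i \neq j$, and integrality of $m_{ij}$ is preserved throughout since products of integers are integers.

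The principal difficulty is the bookkeeping around $W$. The matrix $\varrho(W)$ has an entire row of nontrivial entries together with the non-unit diagonal $-\zeta$, so it is not itself an elementary matrix, and its commutators with elementary matrices produce unipotent products in which several columns simultaneously receive contributions. Identifying commutator sequences whose cross-terms collapse so that a single $E_{i,j}(m)$ with $m \in \Z \setminus \{0\}$ survives will be the technical heart of the argument. A secondary obstacle is verifying that every conjugating element actually lies in $\Gamma(v)$: several of the most convenient candidates ($R_1, S_1, T_1$) do not, so one must restrict to the elements confirmed above and combine them cleverly to reach all basis indices.
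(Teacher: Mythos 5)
Your overall strategy (seed a few elementary matrices from the explicit elements and propagate with Steinberg relations) is the right spirit, but as written it has concrete gaps at exactly the places where the paper's proof has to work hardest. First, your opening step rests on an unverified existence claim: you need $\tau\in\Gamma(v)$ whose image under $\varrho$ fixes the first basis vector and flips the sign of the $(g-1)$-st, so that conjugation fixes $E_{1,r}(k)$ and inverts $E_{g-1,r}(z)$. None of the candidates you list does this: the images of $R_{i+1},S_{i+1},T_{i+1}$ are unipotent products of elementary matrices, and the only diagonal image available is $\varrho(Y)=D_{r-1}$, whose conjugation action fixes \emph{both} factors of $\varrho(V)$, so your cleaning of $V$ fails with the stated toolkit. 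For the same reason your next step, ``conjugate by elements whose action moves the basis vectors around,'' is not available: no element of $\Gamma(v)$ at hand has a monomial (permutation-like) image, so conjugation cannot transport $E_{1,r}$ to other positions; the paper instead runs a careful induction (its rules $\mathfrak{A}_{i,j}$) driven by Steinberg commutators with the explicit images $\varrho(R_{i+1})=E_{i,i+s}$, $\varrho(S_{i+1})=E_{i+s,i}^{-1}$, $\varrho(T_{i+1})$, and the seed $\varrho([W,R_2^{-1}])=E_{s,s+1}(2)$.

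Second, and more seriously, your claim that ``integrality of $m_{ij}$ is preserved throughout'' is false for the one index you cannot avoid: the last column $j=g-1$ (the $\hat C^{0,1}$ direction) is reached only through $W$ (note also that the nontrivial row of $\varrho(W)$ is row $r-1$, not $r$, and it touches only columns $1,\dots,r-1$, $2s$, $2s+1$, not every column), and there the relevant entries are $1\pm\zeta$, which are not rational integers; commutators with $W$ therefore produce entries in $R\smallsetminus\Z$. The paper overcomes this with a specific idea absent from your proposal: by Lemma \ref{elementary}(b) the image $\varrho(\Gamma(v))$ is Galois-invariant, so one may take the \emph{product of the Galois conjugates} of $\varrho([W,Y])$, which has a nonzero rational integer in position $(s,2s+1)$ and, combined with the already established positions, yields $\mathfrak{A}_{s,2s+1}$ and then the whole last column. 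Without this (or an equivalent device), the ``technical heart'' you defer is not a bookkeeping issue but a genuine missing step, and the statement as you argue it would only give elementary matrices with entries $m_{ij}\in R$ rather than nonzero integers in the positions involving $g-1$.
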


\begin{proof} 
Let $s:= r-1$. For $1 \leq i,j \leq 2s+1$ with $i \neq j$ we have to prove the following assertion:
 \[\mathfrak{A}_{i,j}: \exists \, \gamma \in \Gamma(v), \, m \in \Z \smallsetminus \{0\}:  \varrho(\gamma)= E_{ij}(m) \,.\]
  
We first compute $\varrho$ on several basic elements:

\begin{align}
 \varrho (R_{i+1})&= E_{i,i+s}\, ,\quad 1\leq i\leq s-1\,, \label{S30}\\
 \varrho (R_{s+1})&= E_{s,2s}\, E_{2s+1,2s}^{-1}\, ,        \label{S31}\\
 \varrho (S_{i+1})&= E_{i+s,i}^{-1}\, , \quad 1\leq i \leq s\, , \label{S32}\\
 \varrho (T_{i+1})&= E_{i,i+s}\,E_{i+1,i+1+s}\,E_{i,i+1+s}^{-1}\, E_{i+1,i+s}^{-1}\,,\quad 1\leq i \leq s-2 \,, \label{S33}\\
 \varrho (T_{s})&= E_{s-1,2s-1}\, E_{s,2s}\,E_{s-1,2s}^{-1}\,E_{s,2s-1}^{-1}\,E_{2s+1,2s-1}\,E_{2s+1,2s}^{-1} \;\text{if}\; r\geq 3, \label{S34}\\
 \varrho(Y)&= D_s \,, \label{S35}
\end{align}

 where $D_i$ denotes the diagonal matrix diag$(1,\ldots , 1,-1,1, \ldots ,1)$ with $-1$ sitting in the $i$-th place. Next, we will prove the
 following rules which explain how the assertions $\mathfrak{A}_{i,j}$ are related to each other.
 
 \begin{itemize}
  \item [(i)] $\mathfrak{A}_{i,l} $ and $ \mathfrak{A}_{l,j} \Rightarrow \mathfrak{A}_{i,j} $ for $ i,j,l $ pairwise different.
  \item [(ii)] $\mathfrak{A}_{i,j+s} \Rightarrow \mathfrak{A}_{i,j}$ for $1 \leq i,j \leq s$ and $i \neq j$.
  \item [(iii)] $\mathfrak{A}_{i,s+1} \Rightarrow \mathfrak{A}_{i+s,s+1}$ for  $2\leq i \leq s\,.$
  \item [(iv)] $\mathfrak{A}_{s,i+s} \Rightarrow \mathfrak{A}_{s,i+1+s}$ for $1 \leq i \leq s-1\,.$
  \item [(v)] $\mathfrak{A}_{i+1,s+1} \Rightarrow \mathfrak{A}_{i,s+1}$ for $2 \leq i \leq s-1\,.$
 \end{itemize}

 Rule (i) is an easy consequence of the Steinberg relations. By (\ref{S32}) the assertion $\mathfrak{A}_{s+j,j}$ is true for $1 \leq j \leq s$,
 so rules (ii) and (iii) follow from (i). To prove (iv) and (v), consider the elements $L_i:= \varrho(T_{i+1}^{-1}\,R_{i+1}\,R_{i+2})\,,
 1\leq i\leq s-1$. We have 
 
 \begin{equation*}
  L_i =
  \begin{cases}
   E_{i,i+1+s}\,E_{i+1,i+s}\,,& 1\leq i \leq s-2\,,\\
   E_{s-1,2s}\, E_{s,2s-1}\,E_{2s+1,2s-1}^{-1} \,, & i=s-1\,.
  \end{cases}
 \end{equation*}

 Then (iv) follows from rule (ii) and the Steinberg relation
 \[ [\,E_{s,i}(m),L_i\,] = E_{s,i+1+s}(m)\,.\]
 
 Analogously, (v) can be deduced from rule (iii) and the relation 
 \[ [\,L_i, E_{i+1+s,s+1}(m)\,] = E_{i,s+1}(m) \,.\]
 
 Now we want to see how the assertions $\mathfrak{A}_{i,j}$ can be proven with the help of the rules (i)-(v). We compute 
 $\varrho([\,W,R_2^{-1}])= E_{s,s+1}(2)$, so $\mathfrak{A}_{s,s+1}$ is true. We deduce from (iv) and (v) that $\mathfrak{A}_{s,s+i}$
 is true for $1 \leq i \leq s$ and $\mathfrak{A}_{i,s+1}$ is true for $2 \leq i \leq s$. Note that this together with (\ref{S30}) and
 (\ref{S32}) implies that $\mathfrak{A}_{i,i+s}$ and $\mathfrak{A}_{i+s,i}$ are true for all $1 \leq i \leq s$ which give us by the 
 Steinberg relations the rule that for any distinct $i,j$ with $1 \leq i,j \leq s$, the assertions
 \[\mathfrak{A}_{i,j}, \mathfrak{A}_{i,j+s}, \mathfrak{A}_{i+s,j}, \mathfrak{A}_{i+s,j+s} \;\text{are equivalent.}\]
 
 This gives us together with (i) that for all distinct $i,j$ with $1 \leq i,j \leq 2s$ the assertions $\mathfrak{A}_{i,j}$ are true.\\
 Note that (\ref{S31}) and $\mathfrak{A}_{s,2s}$ together imply that $\mathfrak{A}_{2s+1,2s}$ is true and therefore $\mathfrak{A}_{2s+1,j}$
 is true for all $j \leq 2s$ by (i) and $\mathfrak{A}_{2s,j}$.
 
 \smallskip
 By (\ref{S35}) and Lemma \ref{w}, it is easy to compute $\varrho([\,W,Y\,])$. The product of the Galois conjugates of $\varrho([\,W,Y\,])$
 is of the form
 
 \begin{eqnarray*}
 \left[\arraycolsep2.8pt\begin{array}{cccc|cccc|c}
 1\,&      &&&&&&&\\
 &\ddots\, &&& & &&&\\
 & &\;1 &&&& &&\\
 \lambda\,&\cdots &\lambda\,&1&&&& \mu & \nu\\ \hline 
 \rule[-2mm]{0mm}{6mm}&&&&1&&&&\\
 &&&&&\quad\ddots&&&\\[0.5ex]
 &&&&&&&1&\\ \hline
 &&&&&& &&1
\end{array} \right]\end{eqnarray*}

with rational integers $\lambda,\mu, \nu$ and $\nu \neq 0$. By Lemma \ref{elementary} this matrix is also in the image of $\varrho$.
Since $\mathfrak{A}_{s,1},\ldots ,\mathfrak{A}_{s,s-1}$ and $\mathfrak{A}_{s,2s}$ are true, this implies that the assertion $\mathfrak{A}_{s,2s+1}$
is true. From (i) we deduce that $\mathfrak{A}_{i,2s+1}$ is true for all $i \leq 2s$.
\end{proof}

\emph{Proof of Lemma \ref{key}.} First observe that by (\ref{S30}), (\ref{S31}) and Lemma \ref{w} we have 
\[\varrho ([\,R_2,W^{-1}\,]) = E_{s,s+1}(2 \bar{\zeta})\;.\]

By Lemma \ref{elementary}, we conclude that $E_{s,s+1}(2\zeta ) =\varrho (X)$ with $X \in \Gamma(v)$. Consider the elements $\gamma_{ij} \in \Gamma(v)$
from Lemma \ref{steinberg}. Forming iterated commutators of suitable $\gamma_{ij}$'s with $X$, we see that there is an integer $m\neq 0$ and
elements $\delta_{ij} \in \Gamma(v)$ with $\varrho(\delta_{ij})= E_{ij}(m\zeta)$. For $j\neq r$, we have $\varrho([\,\delta_{j,1},V\,])=E_{j,r}(km\zeta)$
by Lemma \ref{v}, and $[\,\delta_{j,1},V\,] \in \Gamma^0$ by Lemma \ref{comm}. Again from Lemma \ref{comm} and the Steinberg relations, we deduce
that there is an integer $n\neq 0$ such that all the matrices $E_{i,j}(n\zeta^{\ell})$ are contained in $\varrho(\Gamma^0)$. By \cite{Va}, the index
of $\varrho(\Gamma^0)$ in $\SL(g-1,R)$ is finite. \hfill $\Box$

\bigskip
Mathematisches Institut\\ der Heinrich-Heine-Universit\"at Düsseldorf \\
Universit\"atsstra\ss e 1\\
D-40225 Düsseldorf

\medskip
deniz@math.uni-duesseldorf.de\\
singhof@math.uni-duesseldorf.de

\end{document}